\newtheorem{remark}{Remark}[section]
\newtheorem{theorem}{Theorem}[section]
\newtheorem{lemma}{Lemma}[section]
\newtheorem{definition}{Definition}[section]
\newtheorem{algorithm}{Algorithm}[section]
\numberwithin{equation}{section}
\newcommand{\bv}{{\bf v}}
\def\T{{\mathcal T}}
\def\E{{\mathcal E}}
\def\vn{{\bf n}}
\def\vt{{\bf t}}
\def\3bar{{|\hspace{-.02in}|\hspace{-.02in}|}}
\newcommand{\Forall}{\textrm{for all }}
\newcommand{\bbK}{\mathbb K}
\newcommand{\bbR}{\mathbb R}
\newcommand{\bbT}{\mathbb T}
\newcommand{\bbQ}{\mathbb Q}
\newcommand{\hvn}{\hat{\vn}}
\newcommand{\hvt}{\hat{\vt}}
\newcommand{\eu}{{e_{\vu}}}
\newcommand{\ep}{{e_p}}
\newcommand{\heu}{{\hat{e}_{\vu}}}
\newcommand{\hep}{{\hat{e}_p}}
\newcommand{\circES}{\E_{0,h}^S}
\newcommand{\circED}{\E_{0,h}^D}
\renewcommand{\div}{\textrm{div}}
\newcommand{\vf}{\boldsymbol{f}}
\newcommand{\vu}{\boldsymbol{u}}
\newcommand{\vv}{\boldsymbol{v}}
\newcommand{\vw}{\boldsymbol{w}}
\newcommand{\vV}{\boldsymbol{V}}
\newcommand{\vzero}{\boldsymbol{0}}
\begin{document}
\title{Weak Galerkin method for the coupled Darcy-Stokes flow}
\author{Wenbin Chen}%
\address{Department of Mathematics, Fudan University, Shanghai, China}%
\email{wbchen@fudan.edu.cn}%
\author{Fang Wang}
\address{Department of Mathematics, Fudan University, Shanghai, China}%
\email{07300180148@fudan.edu.cn}%
\author{Yanqiu Wang} %
\address{Department of Mathematics, Oklahoma State University, Stillwater, OK, USA} %
\email{yqwang@math.okstate.edu} %

\subjclass{65N15, 65N30, 76D07}%
\keywords{coupled Darcy-Stokes equation, weak Galerkin method}%
\begin{abstract}
A family of weak Galerkin finite element discretization is developed
for solving the coupled Darcy-Stokes equation. The equation in
consideration admits the Beaver-Joseph-Saffman condition on the
interface. By using the weak Galerkin approach, in the discrete
space we are able to impose the normal continuity of velocity
explicitly. Or in other words, strong coupling is achieved in the
discrete space. Different choices of weak Galerkin finite element
spaces are discussed, and error estimates are given.
\end{abstract}
\maketitle

\section{Introduction}
The goal of this paper is to propose and analyze a weak Galerkin
finite element discretization for the coupled Darcy-Stokes equation.
The coupled Darcy-Stokes problem has many applications. Readers may
refer to the nice overview \cite{DiscacciatiQuarteroni09} and
references therein for its physical background, modeling, and common
numerical methods. To solve the coupled Darcy-Stokes equation
numerically, one must address two important issues: how to
approximate the Darcy-Stokes interface conditions and how to couple
the discretization on both the Darcy side and the Stokes side. Below
we shall briefly state how these two issues will be addressed in the
proposed weak Galerkin method.

In this paper, we consider the Beavers-Joseph-Saffman (BJS)
interface condition \cite{Saffman,JagerMikelic96,JagerMikelic00,JagerMikelic01,PayneStraughan},
which is easier to handle than the original Beavers-Joseph interface condition \cite{BeaversJoseph},
as the BJS condition will generate a coercive bilinear form in the variational formulation.
The BJS interface condition works well when the flow in the porous region is small comparing to the Stokes flow,
around the interface.

In the Darcy region, one can use either the primal formulation, which only involves the pressure,
or the mixed formulation, which involves both the flux and the pressure, to model the problem.
Here we choose the mixed formulation, which has been studied in
\cite{Arbogast07,Bernardi08,Chen14,GalvisSarkis07,Gatica09,Gatica11,KanschatRiviere,Karper08,Layton03,Marquez14,Riviere05,RiviereYotov05}.
In \cite{Layton03}, rigorous analysis of the mixed formulation and
its weak existence have been presented. 
According to whether the normal continuity of the velocity is explicitly enforced on the interface or not,
two different formulations are proposed in \cite{Layton03}: 
a strongly coupled formulation and a weakly coupled formulation.
Various numerical discretizations have been developed for these two mixed formulations:
the work in \cite{Gatica09,Gatica11,Layton03} are based on the
weakly coupled formulation, while the work in
\cite{Arbogast07,Chen14,Karper08,KanschatRiviere,Riviere05,RiviereYotov05}
are based on the strongly coupled formulation. 
We will adopt the strongly coupled formulation in this paper, which imposes the normal continuity 
of the velocity strongly in the functional space.
Finally, it is also worth
mentioning that a different treatment of the interface condition in the variational formulation is to
use the idea of mortar elements, which gives a ``strong'' but not
pointwise coupling \cite{Bernardi08,GalvisSarkis07,Marquez14}.

For the discretization of both the Darcy side and the Stokes side,
we use the weak Galerkin finite element.
The weak Galerkin method was recently introduced in
\cite{WangYe_PrepSINUM_2011} for second order elliptic equations. It
is an extension of the standard Galerkin finite element method where
classical derivatives were substituted by weakly defined derivatives
on functions with discontinuity. Optimal order of a priori error
estimates has been observed and established for various weak
Galerkin discretization schemes for second order elliptic equations
\cite{WangYe_PrepSINUM_2011, wy-mixed, mwy-wg-stabilization}.
Numerical implementations of weak Galerkin were discussed in
\cite{MuWangWangYe, mwy-wg-stabilization} for some model problems.
Although the method is still very new, it has already demonstrated many
nice properties in various cases \cite{WangYe_PrepSINUM_2011, WG-biharmonic, wy-mixed, mwy-wg-stabilization}.
One important advantage of the weak Galerkin method is that, with stabilization,
it can be constructed on polytopal meshes, i.e., meshes consisting of arbitrary polygons/polyhedra satisfying certain
shape-regularity conditions.

The weak Galerkin method for the mixed formulation of Darcy flow and
the weak Galerkin method for the Stokes flow have been individually
studied in \cite{wy-mixed} and \cite{wy-Stokes}. It seems that one
only needs to combine these two discretizations together, in order
to derive a discretization for the coupled problem. However, it
turns out that the discretization for the coupled Darcy-Stokes
equations is not that simple. First, due to the interface condition,
the formulation of the Stokes side for the coupled Darcy-Stokes
equation involves the symmetric full stress tensor, and hence is
different from the formulation used in \cite{wy-Stokes}.
Consequently, one needs to use the Korn's inequality and the
discrete Korn's inequality in the analysis, which is one of the
difficulties to be solved in this paper. Because of this, some
discrete spaces we will use for the Stokes side are also different
from the family proposed in \cite{wy-Stokes}. Second, since we need
to prove the discrete inf-sup condition on the entire computational
domain, the discrete spaces we choose for the Darcy side are
completely different from the family proposed in \cite{wy-mixed}.
Finally, we mention that, in order to impose the interface condition
strongly, there must be certain constraints on choosing the Darcy
and Stokes side discretizations. For the weak Galerkin method, this
can be solved by using the same discrete space on edges for both the
Darcy and Stokes side.

Another important issue in discretizing the coupled Darcy-Stokes
equation is whether one can use a unified discretization for both
the Darcy and Stokes sides, which can greatly simplify the numerical
simulation. Previous work on unified discretizations include
conforming finite element methods \cite{Arbogast07}, non-conforming
finite element methods \cite{Karper08}, discontinuous Galerkin
methods \cite{Riviere05,RiviereYotov05}, and $H(div)$ conforming
discontinuous Galerkin methods \cite{KanschatRiviere}. In this
paper, we study several different families of weak Galerkin
discretizations in one single framework for the coupled Darcy-Stokes
equations, in which the choice of different discretization spaces
are controlled by a few parameters denoting the degree of
polynomials. Some of these choices will yield unified discretization
for both the Darcy-Stokes side.

We would also like to mention a few other works on the coupled
Darcy-Stokes equations. The coupled system with the more general
interface condition, the Beavers-Joseph condition, has been studied
in \cite{Cao1,Cao2,Cao3}. Domain decomposition solvers have been
studied by different research groups in
\cite{Cao1,Chen11,DiscacciatiQuarteroni11}. Also, a two-grid solver
has been studied in \cite{MuXu}.

For simplicity, we only consider the two dimensional coupled
Darcy-Stokes equation. It is not hard to extend the analysis into
three dimensions. The paper is organized as follows. Section
\ref{sec:model} is devoted to the introduction of the model problem
and some notations. In Section \ref{sec:wg}, we present the weak
Galerkin discretization for the coupled Darcy-Stokes equation and
prove the existence and uniqueness of the discrete solution. Also in
this section, some technique tools are presented, which will be used
in the error analysis. In Section \ref{sec:error}, error estimates
for the weak Galerkin solution are given. And finally, in Section
\ref{sec:numerical}, numerical results are reported.

\section{Model problem and notation} \label{sec:model}
We follow the same notation system and model problem set-up as in \cite{Chen14}.
For reader's convenience, the details are presented below.

Consider the flow in a domain $\Omega\in \mathbb{R}^2$
consisting of a Stokes sub-region $\Omega_S$ and a porous sub-region
$\Omega_D$.
Denote by $\vu$ the velocity and $p$ the pressure. 
On the Stokes side, the symmetric strain and stress tensors are defined, respectively, by
$D(\vu)=\frac{1}{2}(\nabla\vu+\nabla\vu^T)$ and $\bbT(\vu,\,p) = 2\nu D(\vu) - p I$,
where the given constant $\nu>0$ is the fluid viscosity and $I$ is the identity matrix.
On the Darcy side, denote by $\bbK$ the symmetric positive definite permeability tensor.
Moreover, we assume that $\bbK$ is smooth and uniformly bounded above in $\Omega_D$.

Next we state the coupled Darcy-Stokes equation in $\Omega$.
The flow in the Stokes region is governed by the time-independent Stokes equation,
while the flow in the porous region is governed by the Darcy equation, i.e.,
\begin{equation} \label{eq:eq11}
\begin{aligned}
-\nabla\cdot \bbT(\vu,\, p) &=\vf \qquad\; &&\textrm{in }\Omega_S, \\
\bbK^{-1} \vu + \nabla p &= \vf\quad\qquad &&\textrm{in } \Omega_D, \\
 \nabla\cdot\vu &= g\quad\qquad  &&\textrm{in }\Omega,
\end{aligned}
\end{equation}
where $\vf$ and $g$ are given vector-valued and scalar-valued functions, respectively, in $\Omega$.
Such a coupled system has been studied by many researchers.

To complete the problem, interface conditions and boundary conditions need to be imposed.
Denote by $\Gamma_{SD} = (\partial \Omega_D) \cap (\partial \Omega_S)$ the interface between the Stokes and Darcy regions,
and $\Gamma_S = \partial\Omega_S\backslash \Gamma_{SD}$, $\Gamma_D =
\partial\Omega_D\backslash \Gamma_{SD}$ the outer boundary, as shown in Figure \ref{fig:domain}. 
Following the convention, we denote by $(\vn, \vt)$ the unit
outward normal vector and the unit tangential vector that form a right-hand coordinate system
on the boundary of a given domain.
On the interface $\Gamma_{SD}$, a set of unit normal and unit tangential vectors, $(\hvn, \hvt)$, is specified
such that $\hvn$ points from $\Omega_S$ into $\Omega_D$,
as illustrated in Figure \ref{fig:domain}.

 \begin{figure}[h]
   \begin{center}
   \caption{Domain of the coupled Darcy-Stokes problem. }  \label{fig:domain}
   \includegraphics[width=5cm]{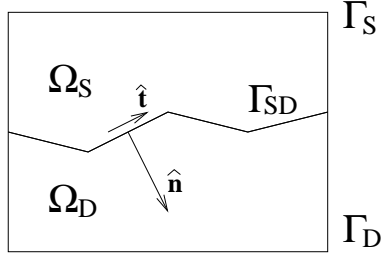}
   \end{center}
 \end{figure}

When necessary, we put $S$ and $D$ in the subscript of $\vu$ and $p$ to distinguish between the Stokes and the Darcy variables,
for example, $\vu_S = \vu|_{\Omega_S}$ and $p_D = p|_{\Omega_D}$.
With the aid of these notations, now we are able to state the boundary and interface conditions.
For simplicity, consider the Dirichlet boundary condition on the Stokes side and the Neumann boundary condition on the Darcy side:
\begin{equation} \label{eq:boundarycondition}
\begin{aligned}
&\vu_S = \vzero &\qquad& \textrm{on }\Gamma_S, \\
&\vu_D\cdot \vn = 0&\qquad& \textrm{on }\Gamma_{D}.
\end{aligned}
\end{equation}
In order to guarantee the uniqueness of the velocity, we assume that $\Gamma_S\neq \emptyset$.
When using the mixed finite element method to discretize system (\ref{eq:eq11}), both boundary conditions in (\ref{eq:boundarycondition}) are essential,
and thus are easier to handle in the rest of the paper. We point out that our analysis can be easily extended to natural boundary conditions,
i.e., Neumann boundary condition on the Stokes side and Dirichlet boundary condition on the Darcy side,
as well as other possible type of boundary conditions.

The interface conditions are defined on $\Gamma_{SD}$ and consist of three parts \cite{BeaversJoseph,Saffman}:
\begin{align}
&\vu_S\cdot\hvn = \vu_D\cdot\hvn, \label{eq:interface1}\\
&-\bbT(\vu_S,\, p_S)\hvn\cdot\hvn = p_D, \label{eq:interface2}\\
&-\bbT(\vu_S,\, p_S)\hvn\cdot \hvt = \mu \bbK^{-1/2} \, \vu_S\cdot \hvt, \label{eq:interface3}
\end{align}
where (\ref{eq:interface3}) is the famous Beavers-Joseph-Saffman condition, in which 
$\mu>0$ is an experimentally determined coefficient.
We assume that $\mu$ is smooth and uniformly bounded both above and away from zero.
Conditions (\ref{eq:interface2}) and (\ref{eq:interface3}) can be combined into one:
\begin{equation} \label{eq:interfaceCondition}
\bbT(\vu_S,\, p_S)\hvn + p_D \hvn + \mu \bbK^{-1/2} \, (\vu_S\cdot \hvt)\hvt = 0\qquad\textrm{on }\Gamma_{SD}.
\end{equation}

By the divergence theorem, the homogeneous boundary condition (\ref{eq:boundarycondition}) requires that 
$g$ satisfies a compatibility condition $\int_{\Omega} g\, dx = 0$.
It is also obvious that the pressure $p$ is unique only up to a constant. Hence we conveniently assume that
$$
\int_{\Omega} p\, dx = 0.
$$

Throughout this paper, we consider the the mixed formulation of problem (\ref{eq:eq11}), which has been studied in details
in \cite{GiraultKanschatRiviere12,Layton03}.
Given a polygon $K$, denote by $H^s(K)$ the
usual Sobolev space equipped with the norm $\|\cdot\|_{s, K}$. For $s =0$,
$H^0(K)$ coincides with the square integrable space $L^2(K)$ and we simply
denote the $L^2$ norm on $K$ by $\|\cdot\|_K$. 
Denote by $(\cdot,\cdot)_{K}$ and
$<\cdot,\cdot>_{K}$ the $L^2$ inner-product and the duality form,
respectively, in $K$. When $K=\Omega$, we suppress the subscript $K$
in the norm and the inner-product, for example, $\|\cdot\|_{s} = \|\cdot\|_{s, \Omega}$, $\|\cdot\| = \|\cdot\|_{\Omega}$,
and $(\cdot,\cdot) = (\cdot,\cdot)_{\Omega}$.
Finally, all the above-defined notations can be easily
extended to vector and tensor spaces, using the usual tensor products.

Define the $H(\div,\, K)$ space and its norm, respectively, by
$$
H(\div,\, K) = \{\vv\in (L^2(K))^2,\, \nabla\cdot\vv\in L^2(K) \},
$$
and
$$
\|\vv\|_{H(\div,\, K)} = (\|\vv\|_K^2 + \|\nabla\cdot\vv\|_K^2)^{1/2}.
$$
The trace of functions in $H(\div,\, K)$ is a subtle issue and has been discussed in
many classical works \cite{GiraultRaviart, LionsMagenes}. Let $\Gamma\subset \partial K$.
We start from defining
$$
H_{0,\Gamma}^1(K) = \{v\in H^1(K), \, v = 0 \textrm{ on }\Gamma \}.
$$
For all $v\in H_{0,\Gamma}^1(K)$, it is well-known by the trace theorem that 
$v|_{\partial K\backslash \Gamma}\in H_{00}^{1/2}(\partial K\backslash \Gamma)$,
where $H_{00}^{1/2}(\partial K\backslash \Gamma)$ is a subspace of $H^{1/2}(\partial K\backslash \Gamma)$
consisting of functions that can be extended by $0$ to $H^{1/2}(\partial K)$. 
Readers can refer to \cite{GiraultRaviart, LionsMagenes} for more details.
For any function $\vv\in H(\div,\, K)$, by the trace theorem one has 
$\vv\cdot\vn|_{\partial K} \in H^{-1/2}(\partial K)$. However, $\vv\cdot\vn$ may not be well-defined on a subset of $\partial K$. 
One needs to use the dual space of $H_{00}^{1/2}$ in order to obtain a rigorous definition of the trace.
Define
$$
H_{0,\Gamma}(\div,\, K) = \{\vv\in H(\div,\,K),\, \vv\cdot\vn = 0 \textrm{ on }\Gamma \},
$$
where $\vv\cdot\vn = 0$ is in the sense of $\vv\cdot\vn \in (H_{00}^{1/2}(\Gamma))^*$.
When $\Gamma = \partial K$, we simply denote $H_{0,\partial K}(\div,\, K) = H_{0}(\div,\, K)$
and $H_{0,\partial K}^1(K) = H_0^1(K)$.

Now we are able to introduce the mixed variational formulation for system (\ref{eq:eq11})-(\ref{eq:interface3}). 
To this end, we start from defining the spaces for the velocity and the pressure, respectively, by
$$
\begin{aligned}
\vV &= \{\vv\in H_0(\div, \Omega)\, |\,\vv_S\in H^1(\Omega_S)^2 \textrm{ and } \vv|_{\Gamma_S} = \vzero \}, \\
\Psi &= L_0^2(\Omega) \triangleq \{q\in L^2(\Omega)\, |\, \int_\Omega q\, dx = 0\}.
\end{aligned}
$$
It is not hard to see that $\vV$ and $\Psi$, equipped with the norms $(\|\vv\|_{1,\Omega_S}^2 + \|\vv\|_{H(\div,\Omega_D)}^2)^{1/2}$
and $\|q\|$ respectively, are both Hilbert spaces.
An important property of $\vV$ is that, all functions in $\vV$ explicitly satisfy the interface condition (\ref{eq:interface1}),
according to the properties \cite{Brezzi} of $H(\div,\,\Omega)$.

Define bilinear forms $a(\cdot, \cdot):\: \vV\times \vV\to \bbR$ and $b(\cdot, \cdot):\: \vV\times \Psi\to \bbR$ by
$$
\begin{aligned}
a(\vu, \vv) &= a_S(\vu, \vv) + a_D(\vu, \vv) + a_I(\vu, \vv),\\
b(\vv, q) &= -(\nabla\cdot \vv, q),
\end{aligned}
$$
where
$$
\begin{aligned}
a_S(\vu, \vv) & = 2\nu (D(\vu), D(\vv))_{\Omega_S}, \\
a_D(\vu, \vv) &= (\bbK^{-1} \vu, \vv)_{\Omega_D}, \\
a_I(\vu, \vv) &= <\mu\bbK^{-1/2}\vu_s\cdot\hvt, \vv_s\cdot\hvt>_{\Gamma_{SD}}.
\end{aligned}
$$
The mixed variational formulation of the coupled Darcy-Stokes equation can be written as:
{\it Find $(\vu, \, p)\in \vV\times \Psi$ such that}
\begin{equation} \label{eq:mixedweakformulation}
\begin{cases}
a(\vu, \vv) + b(\vv, p) = (\vf, \vv) \quad& \textrm{for all } \vv\in \vV, \\
b(\vu, q) = -(g, q)\quad& \textrm{for all } q\in \Psi.
\end{cases}
\end{equation}
In \cite{Layton03}, Layton, Schieweck and Yotov has proved the equivalence between (\ref{eq:mixedweakformulation}) and (\ref{eq:eq11})-(\ref{eq:interface3}),
as well as the existence and uniqueness of the solution to (\ref{eq:mixedweakformulation}).

\section{Weak Galerkin discretization} \label{sec:wg}
In this section, we discuss the weak Galerkin discretization for the coupled Darcy-Stokes problem (\ref{eq:eq11})-(\ref{eq:interface3}).
For simplicity of notation, throughout the paper, we use ``$\lesssim$''
to denote ``less than or equal to up to a general constant
independent of the mesh size or functions appearing in the
inequality''.
But "$\lesssim$" may depend on $\nu$, $\bbK$, $\mu$, $\Omega$ or $\Gamma_{SD}$.

Let $\mathcal{T}_h$ be a polygonal mesh defined on $\Omega$
satisfying the shape regularity conditions proposed in
\cite{MuWangWang,wy-mixed}, in order to guarantee the existence of
the usual trace inequality, inverse inequality, and the
approximability of polynomials on polygons. We require that
$\mathcal{T}_h$ be aligned with $\Gamma_{SD}$. For each polygon
$K\in \mathcal{T}_h$, denote by $K_0$ and $\partial K$ the interior
and the boundary of $K$, respectively. Also, denote by $h_K$ the
diameter of the element $K$, and set
$h=\max_{K\in\mathcal{T}_h}h_K$. Denote by $\mathcal{T}_h^S$ and
$\mathcal{T}_h^D$ the restriction of $\T_h$ in $\Omega_S$ and
$\Omega_D$, respectively.

Denote by ${\E}_h$  the set of all edges in ${\T}_h$.
For each edge $e\in \E_h$, denote by $h_e$ its length.
Let $\mathcal{E}_h^{SD}$ be the set of all edges in $\mathcal{T}_h\cap \Gamma_{SD}$,
and let $\mathcal{E}_h^S$, $\mathcal{E}_h^D$ be the set of all edges in
$\mathcal{T}_h\cap (\Omega_S\cup\Gamma_S)$, $\mathcal{T}_h\cap (\Omega_D\cup\Gamma_D)$, respectively.
We also denote $\circES$ and $\circED$ to be the set of edges interior to $\Omega_S$ and $\Omega_D$, respectively.

Let $j$ be a non-negative integer. On each $K\in \mathcal{T}_h$,
denote by $P_j(K_0)$ or $P_j(K)$ the set of polynomials with degree
less than or equal to $j$. Likewise, on each $e\in \E_h$, $P_j(e)$
is the set of polynomials of degree no more than $j$. Following
\cite{wy-mixed,wy-Stokes}, we define the weak Galerkin spaces:
$$
\begin{aligned}
\vV_h &= \{\vv=\{\vv_0,\vv_b\}:\,&& \vv_0|_{K_0} \in
[P_{\alpha_S}(K_0)]^2\textrm{ for }K\in \T_h^S, \\
    &&&                             \vv_b|_e \in [P_{\beta}(e)]^2\textrm{ for }e\in \E_h^S\cup\E_h^{SD}, \\
    &&& \vv_0|_{K_0} \in [P_{\alpha_D}(K_0)]^2\textrm{ for }K\in
    \T_h^D,\\
    &&&                             \vv_b|_e = v_b \vn_e \textrm{ where }v_b \in P_{\beta}(e)\textrm{ for }e\in \E_h^D, \\
    &&& \vv_b|_e = 0 \textrm{ for }e\in \E_h\cap \partial\Omega\},
\end{aligned}
$$
where $\alpha_S$, $\alpha_D$ and $\beta$ are non-negative integers, $\vn_e$ is a prescribed normal direction for edge $e\in \E_h^D$, and
$$
\Psi_h = \{q\in L_0^2(\Omega) :\, q|_K\in P_{\gamma_S}(K)\textrm{ for } K\in \T_h^S \textrm{ and }
  q|_K\in P_{\gamma_D}(K)\textrm{ for } K\in \T_h^D \},
$$
where $\gamma_S$ and $\gamma_D$ are non-negative integers.
Moreover, assume that
\begin{equation} \label{eq:parameters}
\begin{aligned}
\beta-1\le\gamma_S &\le \beta\le \alpha_S\le\beta+1, \\
\beta-1\le\gamma_D &\le \beta= \alpha_D, \\
\alpha_S &\le \gamma_S+1.&&
\end{aligned}
\end{equation}
Later we shall discuss more about the choice of parameters
$\alpha_S$, $\alpha_D$, $\beta$, $\gamma_S$, and $\gamma_D$. But let
use first give two examples that satisfy (\ref{eq:parameters}), both
providing unified discretizations for both the Darcy and Stokes
sides:
\begin{description}
\item[Example 1] Set $\alpha_S=\alpha_D=\beta=\gamma_S=\gamma_D = j$ where $j\ge
1$;
\item[Example 2] Set $\alpha_S = \alpha_D=\beta = j$ and $\gamma_S = \gamma_D = j-1$, where $j\ge 1$.
\end{description}

\begin{remark}
Condition (\ref{eq:parameters}) is derived from certain constraints
on constructing weak Galerkin spaces, which will become clear after
defining the weak gradient, the weak divergence and the
well-posedness of the discrete system. Indeed, a minimum set of
conditions on the parameters looks like
\begin{equation}  \label{eq:parameters-min}
\begin{aligned}
\beta-1\le\gamma_S &\le \beta\le \alpha_S\le\beta+1, \\
\beta-1\le\gamma_D &\le \beta\le \alpha_D\le\beta+1, \\
\alpha_S \le \gamma_S+1,&\qquad \alpha_D \le \gamma_D+1,
\end{aligned}
\end{equation}
which is more general than Condition (\ref{eq:parameters}). However,
when performing the error analysis we realized that, by enforcing
$\alpha_D = \beta$, the theoretical error estimate in terms of
$\gamma_D$ is one order higher than in the case of $\alpha_D >
\beta$. Hence in this paper we shall focus on the case $\alpha_D =
\beta$. Note that Condition (\ref{eq:parameters-min}) combined with
$\alpha_D = \beta$ gives exactly Condition (\ref{eq:parameters}).
\end{remark}

Note that for both the Stokes side and the Darcy side, we
deliberately set $\vv_b$ to have the same polynomial degree, which
ensures seamless transition on the Darcy-Stokes interface. The
spaces defined above are different from the spaces introduced in
\cite{wy-mixed,wy-Stokes}, which are designed for the Darcy flow and
for the Stokes equations individually. This is because we have
found, while working on the theoretical analysis, that the spaces in
\cite{wy-mixed,wy-Stokes} may not be suitable for discretizing the
coupled Darcy-Stokes equation. Therefore, we have to construct a new
set of spaces $\vV_h$ and $\Psi_h$.

Next, we define the weak gradient and the weak divergence on
$\vV_h$. Both of them are defined element-wisely. For each $K\in
\T_h$ and $\vv=\{\vv_0,\vv_b\}$, define the weak gradient
$\nabla_w\vv|_K \in [P_{\beta}(K)]^{2\times 2}$ and weak divergence
$\nabla_w\cdot\vv \in P_{\beta}(K)$, respectively, by
\begin{align}
(\nabla_w \vv, \tau)_K &= -(\vv_0, \nabla\cdot \tau)_K + <\vv_b, \tau\vn>_{\partial K} \quad&&\Forall \tau \in [P_{\beta}(K)]^{2\times 2},
  \label{eq:weakgrad}\\
(\nabla_w\cdot \vv, q)_K &= -(\vv_0, \nabla q)_K + <\vv_b\cdot\vn, q>_{\partial K} \quad &&\Forall q\in P_{\beta}(K).
  \label{eq:weakdiv}
\end{align}
Note that the weak gradient is only needed on $K\in\T_h^S$, while
the weak divergence is needed on both $K\in\T_h^S$ and $K\in\T_h^D$.
By the definition of the space $\vV_h$, when an edge $e$ of $K\in
\T_h^D$ lies in $\E_h^D$, we have $\vv_b = v_b \vn_e$ where $v_b\in
P_{\beta}(e)$ on this edge; and when an edge $e$ of $K\in \T_h^D$
lies in $\E_h^{SD}$, we have $\vv_b\in [P_{\beta}(e)]^2$ on this
edge. In both cases, $\vv_b\cdot\vn \in P_{\beta}(e)$ and thus the
definition of the weak divergence in (\ref{eq:weakdiv}) is
consistent on all $K\in \T_h$.

Denote
$$
D_w(\vv) = \frac{1}{2} \left(\nabla_w \vv + (\nabla_w\vv)^T\right).
$$
On each $K\in \T_h$, denote by $Q_0$ the $L^2$ projection onto
$(P_{\alpha_{S}}(K))^2$ or $(P_{\alpha_{D}}(K))^2$, depending on
whether $K$ is in $\T_h^S$ or $T_h^D$. On each $e\in\E_h$, denote by
$Q_b$ the $L^2$ projection onto $(P_{\beta}(e))^2$ or
$P_{\beta}(e)\vn$, depending on whether $e$ is on the Stokes side or
the Darcy side. On the Stokes side including the interface, $Q_b$
operates on both components of the velocity, while on the Darcy
side, it only operates on the normal components of the velocity. On
the interface $\Gamma_{SD}$, the normal component of the velocity is
continuous and thus $Q_b$ transits naturally between the Darcy and
the Stokes subdomains. Combining these local projections together,
we can define an $L^2$ projection $Q_h=\{Q_0,Q_b\}$ onto $\vV_h$.
Similarly, denote by $\bbQ_h$ the $L^2$ projection onto $\Psi_h$.
Now we can define the bilinear forms
$$
\begin{aligned}
a_h(\vu, \vv) &= a_{h,S}(\vu, \vv) + a_{h,D}(\vu, \vv) + a_I(\vu, \vv)\qquad &&\textrm{for }\vu, \vv\in \vV_h, \\
b_h(\vv, q) &= -(\nabla_w\cdot \vv, q) \qquad &&\textrm{for } \vv\in
\vV_h\textrm{ and }q\in \Psi_h,
\end{aligned}
$$
where
$$
\begin{aligned}
a_S(\vu, \vv) & = 2\nu (D_w(\vu), D_w(\vv))_{\Omega_S}
   + \rho_S \sum_{K\in \T_h^S} h_K^{-1} <Q_b\vu_0-\vu_b, Q_b\vv_0-\vv_b>_{\partial K}, \\
a_D(\vu, \vv) &= (\bbK^{-1} \vu_0, \vv_0)_{\Omega_D}
  + \rho_D \sum_{K\in\T_h^D} h_K^{-1} <(\vu_0-\vu_b)\cdot\vn, (\vv_0-\vv_b)\cdot\vn>_{\partial K} , \\
a_I(\vu, \vv) &= <\mu\bbK^{-1/2}\vu_b\cdot\hvt, \vv_b\cdot\hvt>_{\Gamma_{SD}},
\end{aligned}
$$
in which $\rho_S$ and $\rho_D$ are positive constants. One can view
$\rho_S$ and $\rho_D$ as stabilization parameters, but the good news
is that the weak Galerkin method does not depend on these parameters
as the discontinuous Galerkin method does. One can simply set
$\rho_S=\rho_D=1$.

The weak Galerkin formulation for the Darcy-Stokes flow can now be
written as: find $\vu\in\vV_h$ and $p\in \Psi_h$ such that
\begin{equation} \label{eq:weakGalerkinformulation}
\begin{cases}
a_h(\vu, \vv) + b_h(\vv, p) = (\vf, \vv_0) \quad& \textrm{for all } \vv\in \vV_h, \\
b_h(\vu, q) = -(g, q)\quad& \textrm{for all } q\in \Psi_h.
\end{cases}
\end{equation}
We shall analyze the well-posedness and approximation properties of the weak Galerkin discretization
(\ref{eq:weakGalerkinformulation}).

\begin{remark}
The bilinear form $a_h(\vu,\vv)$ contains a stabilization part, and for convenience, we denote it by
$$
\begin{aligned}
s(\vu,\vv) &= \rho_S \sum_{K\in \T_h^S} h_K^{-1} <Q_b\vu_0-\vu_b,
Q_b\vv_0-\vv_b>_{\partial K} \\
  &\qquad + \rho_D \sum_{K\in\T_h^D} h_K^{-1} <(\vu_0-\vu_b)\cdot\vn, (\vv_0-\vv_b)\cdot\vn>_{\partial K} .
  \end{aligned}
$$
\end{remark}
\smallskip

\subsection{Discrete norm}
Define a discrete norm on $\vV_h$ by
$$
\begin{aligned}
\|\vv\|_{\vV_h} &= \bigg(2\nu\|D_w(\vv)\|_{\Omega_S}^2 + \rho_S\sum_{K\in\T_h^S} h_K^{-1}\|Q_b\vv_0-\vv_b\|_{\partial K}^2 \\
&\qquad + \|\bbK^{-1/2}\vv_0\|_{\Omega_D}^2 + \rho_D\sum_{K\in\T_h^D} h_K^{-1} \|(\vv_0-\vv_b)\cdot\vn\|_{\partial K}^2 \\
&\qquad + \|\nabla_w\cdot\vv\|_{\Omega}^2 + \|\mu^{1/2}\bbK^{-1/4}\vv_b\cdot\hvt\|_{\Gamma_{SD}}^2 \bigg)^{1/2}.
\end{aligned}
$$
It is obvious that $a_h(\vv,\vv) = \|\vv\|_{\vV_h}^2$. The discrete
norm on $\Psi_h$ inherits the norm on $\Psi$, which is just the
$L^2$ norm. Denote $\3bar\cdot\3bar = \|\cdot\|_{\vV_h\times
\Psi_h}$. We shall prove that $\|\cdot\|_{\vV_h}$ is a well-defined
norm for certain choices of parameters $\alpha_S$, $\alpha_D$,
$\beta$, $\gamma_S$ and $\gamma_D$. To this end, we only need to
show that $\|\vv\|_{\vV_h}=0$ implies $\vv \equiv 0$ for $\vv\in
\vV_h$. By definition, $\|\vv\|_{\vV_h}=0$ indicates that
\begin{align}
D_w(\vv) &= \vzero \textrm{ on } K\in \T_h^S, & Q_b\vv_0 - \vv_b &= \vzero \textrm{ on } e\in \E_h^S\cup\E_h^{SD},
  \label{eq:well-posedness1-S}\\
\vv_0 &= \vzero \textrm{ on } K\in \T_h^D, & (\vv_0-\vv_b)\cdot\vn &= 0 \textrm{ on } e\in \E_h^D\cup\E_h^{SD},
  \label{eq:well-posedness1-D}\\
&& \vv_b\cdot \hvt &= 0 \textrm{ on } e\in \E_h^{SD}. \label{eq:well-posedness1-I}
\end{align}
By examining these equations and depending on whether $\alpha_S = \beta$ or $\alpha_S = \beta+1$, we have the following results.

\begin{lemma}  \label{lem:vV-h-norm-1}
If $\alpha_S = \beta$, then $\|\cdot\|_{\vV_h}$ is a well-defined norm on $\vV_h$.
\end{lemma}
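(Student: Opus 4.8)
The plan is to verify that $\|\vv\|_{\vV_h}=0$ forces $\vv\equiv\vzero$, reading off the vanishing of each nonnegative term as recorded in (\ref{eq:well-posedness1-S})--(\ref{eq:well-posedness1-I}). I would dispatch the Darcy side and the interface first, purely by linear algebra, and only then treat the Stokes side, since the Stokes argument needs the interface traces already known to vanish. On the Darcy side, (\ref{eq:well-posedness1-D}) gives $\vv_0=\vzero$ on every $K\in\T_h^D$, so $(\vv_0-\vv_b)\cdot\vn=0$ collapses to $\vv_b\cdot\vn=0$ on $\E_h^D\cup\E_h^{SD}$. For $e\in\E_h^D$ the space definition writes $\vv_b=v_b\vn_e$, whence $v_b=0$ and $\vv_b=\vzero$ on all Darcy edges. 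On an interface edge $e\in\E_h^{SD}$ we also have $\vv_b\cdot\hvt=0$ by (\ref{eq:well-posedness1-I}); since $\{\hvn,\hvt\}$ is an orthonormal frame, $\vv_b\cdot\vn=0$ and $\vv_b\cdot\hvt=0$ together give $\vv_b=\vzero$ on $\E_h^{SD}$.

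For the Stokes side the central tool is a Korn-type identity obtained by applying Green's formula inside the weak gradient definition (\ref{eq:weakgrad}): for any symmetric $\tau\in[P_\beta(K)]^{2\times2}$,
\[
(D_w(\vv),\tau)_K=(D(\vv_0),\tau)_K+<\vv_b-\vv_0,\tau\vn>_{\partial K}.
\]
This is exactly where the hypothesis $\alpha_S=\beta$ is used. On each edge $e\subset\partial K$ the trace $\tau\vn|_e$ is a polynomial of degree at most $\beta$, hence lies in $[P_\beta(e)]^2$; and the vanishing of $Q_b\vv_0-\vv_b$ on $\E_h^S\cup\E_h^{SD}$ means $\vv_b$ is precisely the $L^2(e)$-projection of $\vv_0$ onto $[P_\beta(e)]^2$, so $<\vv_b-\vv_0,\tau\vn>_{\partial K}=0$. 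Choosing $\tau=D(\vv_0)\in[P_{\beta-1}(K)]^{2\times2}\subset[P_\beta(K)]^{2\times2}$ and using $D_w(\vv)=\vzero$ yields $\|D(\vv_0)\|_K=0$. Thus $D(\vv_0)=\vzero$ on each $K\in\T_h^S$, i.e.\ $\vv_0$ is a rigid-body motion on every Stokes element.

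It remains to propagate this to $\vv_0\equiv\vzero$. A rigid motion has degree at most one, so its trace on any edge lies in $[P_\beta(e)]^2$ (taking $\beta\ge1$; the degenerate $\beta=0$ case merely forces $\vv_0$ constant and is handled identically), and therefore $Q_b\vv_0=\vv_0|_e$ exactly. The relation $\vv_b=Q_b\vv_0$ then upgrades to genuine continuity of the degree-$\le1$ field $\vv_0$ across every interior Stokes edge; since two rigid motions that agree on a nondegenerate segment must coincide, connectivity makes $\vv_0$ a single rigid motion on each connected component of $\Omega_S$. Each such component carries a boundary edge on $\Gamma_S$ or on $\Gamma_{SD}$, and on both $\vv_b=\vzero$ — on $\Gamma_S$ by the Dirichlet condition built into $\vV_h$, on $\Gamma_{SD}$ by the interface step above — so $\vv_0|_e=Q_b\vv_0=\vv_b=\vzero$ on that segment. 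A rigid motion vanishing on a segment is identically zero, giving $\vv_0\equiv\vzero$ on $\Omega_S$ and then $\vv_b=\vzero$ on the remaining Stokes edges; together with the Darcy/interface conclusions this proves $\vv\equiv\vzero$.

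I expect the genuine obstacle to be this last Stokes argument: pinning down the clean identity $D_w(\vv)=D(\vv_0)$ (which is what $\alpha_S=\beta$ secures, by ensuring $\nabla\vv_0$ is recovered exactly by the weak gradient) and then arguing rigorously that elementwise-rigid motions with matching edge projections glue into one global rigid motion that the boundary data annihilates. The Darcy and interface parts are essentially bookkeeping once the frame decomposition $\{\hvn,\hvt\}$ is invoked.
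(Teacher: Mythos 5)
Your proposal is correct and follows essentially the same route as the paper: eliminate the Darcy and interface unknowns by direct inspection, use the symmetrized weak-gradient identity together with $Q_b\vv_0=\vv_b$ to conclude $D(\vv_0)=\vzero$ elementwise, and then glue the elementwise rigid motions via edge continuity and kill them with the vanishing boundary/interface traces. Your treatment is, if anything, slightly more explicit than the paper's about the $\beta=0$ case and about why a rigid motion vanishing on a segment must be zero.
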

{\bf Proof}
From (\ref{eq:well-posedness1-D}), it is not hard to see that $\vv_0$ and $\vv_b$ vanishes on all
$K\in \T_h^D$ and $e\in \E_h^D$. Combining (\ref{eq:well-posedness1-D}) and (\ref{eq:well-posedness1-I}),
we also know that $\vv_b$ vanishes on $\Gamma_{SD}$.

On $K\in \T_h^S$, using the definition (\ref{eq:weakgrad}) gives
$$
\begin{aligned}
((\nabla_w \vv)^T, \tau)_K = (\nabla_w \vv, \tau^T)_K =
 -(\vv_0, \nabla\cdot \tau^T)_K + &<\vv_b, \tau^T\vn>_{\partial K} \\
 & \quad\Forall \tau \in [P_{\beta}(K)]^{2\times 2},
 \end{aligned}
$$
which, combined with (\ref{eq:weakgrad}), implies that
$$
2(D_w(\vv), \tau)_K = -(\vv_0, \nabla\cdot (\tau+\tau^T))_K + <\vv_b, (\tau+\tau^T)\vn>_{\partial K} \quad\Forall \tau \in [P_{\beta}(K)]^{2\times 2}.
$$
Therefore, $D_w(\vv) = \vzero$ on $K\in \T_h^S$ implies that for all symmetric $\tau\in [P_{\beta}(K)]^{2\times 2}$,
$$
\begin{aligned}
0 &= -(\vv_0, \nabla\cdot \tau)_K + <\vv_b, \tau\vn>_{\partial K} \\
    &= (\nabla \vv_0, \tau)_K - <\vv_0 - \vv_b, \tau\vn>_{\partial K} \\
    &= (D(\vv_0), \tau)_K - <Q_b \vv_0 - \vv_b, \tau\vn>_{\partial K} \\
    &= (D(\vv_0), \tau)_K,
\end{aligned}
$$
where the last step follows from (\ref{eq:well-posedness1-S}). Hence
we have $D(\vv_0) \equiv \vzero$ on all $K\in \T_h^S$. By the
definition of $D(\cdot)$, this in turn implies that $\vv_0|_K \in
RM$ where $RM = span\{\begin{bmatrix}1\\0\end{bmatrix},
\begin{bmatrix}0\\1\end{bmatrix},
\begin{bmatrix}-y\\x\end{bmatrix}\}$ denotes the space of rigid body
motions. If we can further show that $\vv_0$ is continuous on the
entire $\Omega_S$, then by the definition of $RM$ it is not hard to
see that $\vv_0|_{\Omega_S} \in RM$.

Now, since $\alpha_S = \beta$, by (\ref{eq:well-posedness1-S}) we known that $\vv_0$ must be continuous in the entire
$\Omega_S$. Therefore $\vv_0|_{\Omega_S} \in RM$.
Note that $\vv_b$ vanishes on $\partial\Omega_S$, again by (\ref{eq:well-posedness1-S}), $\vv_0$ must also vanish on
$\partial\Omega_S$, which implies that $\vv_0\equiv \vzero$ in $\Omega_S$.
Consequently, $\vv_b\equiv \vzero$ on $e\in \E_h^S\cup\E_h^{SD}$. This completes the proof of the lemma.
$\Box$
\medskip

For $\alpha_S = \beta+1$, the situation is more complicated. Using
the same argument as in the proof of Lemma \ref{lem:vV-h-norm-1},
for $\alpha_S = \beta+1$ we can still prove that $\vv_0=\vv_b \equiv
\vzero$ in $\Omega_D$ and $\vv_0|_K \in RM$ for $K\in \T_h^S$. Now,
$Q_b$ on the Stokes side is no longer an identity operator, and
hence Equation (\ref{eq:well-posedness1-S}) only implies that
$Q_b\vv_0$ is continuous across the edges in $\Omega_S$, while
$\vv_0$ is not necessarily continuous. We need to consider two cases
separately, the case $\beta\ge 1$ and the case $\beta=0$.

For $\beta\ge 1$ and $\vv_0|_K\in RM\subset [P_1(K)]^2$, we can
still get $Q_b\vv_0 = \vv_0$ on edges, which together with the
continuity of $Q_b\vv_0$ across edges implies that $\vv_0$ is
continuous in the entire $\Omega_S$. Thus $\vv_0|_{\Omega_S} \in
RM$. The rest is similar to the proof of Lemma \ref{lem:vV-h-norm-1}
and is summarized in the following lemma:

\begin{lemma} \label{lem:vV-h-norm-2}
If $\alpha_S = \beta+1$ and $\beta\ge 1$, then $\|\cdot\|_{\vV_h}$ is a well-defined norm on $\vV_h$.
\end{lemma}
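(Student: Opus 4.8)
The plan is to follow the blueprint of Lemma~\ref{lem:vV-h-norm-1} and reuse everything in that proof which does not depend on $Q_b$ being the identity, then supply the single extra ingredient needed when $\alpha_S=\beta+1$. As before, I would start from the hypothesis $\|\vv\|_{\vV_h}=0$, which furnishes (\ref{eq:well-posedness1-S})--(\ref{eq:well-posedness1-I}). The Darcy-side reasoning is verbatim that of Lemma~\ref{lem:vV-h-norm-1}: (\ref{eq:well-posedness1-D}) forces $\vv_0\equiv\vzero$ on every $K\in\T_h^D$ and $\vv_b\equiv\vzero$ on every $e\in\E_h^D$, and combining with (\ref{eq:well-posedness1-I}) gives $\vv_b\equiv\vzero$ on $\Gamma_{SD}$. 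The integration-by-parts identity used in the proof of Lemma~\ref{lem:vV-h-norm-1} only invokes $Q_b\vv_0-\vv_b=\vzero$ from (\ref{eq:well-posedness1-S}), not the identity property of $Q_b$, so it again yields $(D(\vv_0),\tau)_K=0$ for all symmetric $\tau\in[P_\beta(K)]^{2\times2}$, whence $D(\vv_0)\equiv\vzero$ and $\vv_0|_K\in RM$ for each $K\in\T_h^S$.

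The one place where the proof of Lemma~\ref{lem:vV-h-norm-1} genuinely used $\alpha_S=\beta$ was in upgrading the local membership $\vv_0|_K\in RM$ to global continuity of $\vv_0$. For $\alpha_S=\beta+1$ the trace of $\vv_0|_K$ on an edge generically has degree $\beta+1$, so $Q_b$ is no longer the identity and (\ref{eq:well-posedness1-S}) tells us only that $Q_b\vv_0$ (equal to the single-valued edge function $\vv_b$) is continuous across interior Stokes edges. The observation that rescues the argument is that we already know $\vv_0|_K\in RM\subset[P_1(K)]^2$; since $\beta\ge1$, the trace of such a function on any edge $e$ lies in $[P_1(e)]^2\subset[P_\beta(e)]^2$, on which $Q_b$ acts as the identity. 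Hence $Q_b\vv_0=\vv_0$ on every Stokes edge, so the single-valuedness of $Q_b\vv_0$ passes to $\vv_0$ itself. Thus $\vv_0$ is continuous on all of $\Omega_S$, and a piecewise rigid body motion that is globally continuous is a single rigid body motion, giving $\vv_0|_{\Omega_S}\in RM$.

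It then remains to close the argument with the boundary data exactly as in Lemma~\ref{lem:vV-h-norm-1}. Since $\vv_b\equiv\vzero$ on $\Gamma_S$ (by the definition of $\vV_h$) and on $\Gamma_{SD}$ (shown above), we have $\vv_b\equiv\vzero$ on $\partial\Omega_S$; combined with $Q_b\vv_0=\vv_b$ and the identity $Q_b\vv_0=\vv_0$ on edges, this forces $\vv_0\equiv\vzero$ on $\partial\Omega_S$. A direct computation shows that a rigid body motion vanishing on a non-degenerate boundary edge must vanish identically, and $\Gamma_S\neq\emptyset$ supplies such an edge, so $\vv_0\equiv\vzero$ throughout $\Omega_S$ and consequently $\vv_b\equiv\vzero$ on all $e\in\E_h^S\cup\E_h^{SD}$. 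This yields $\vv\equiv\vzero$ and hence that $\|\cdot\|_{\vV_h}$ is a norm.

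I expect the only subtle point to be the step promoting local rigid body motions to a global one: the whole lemma hinges on the fact that, although $Q_b$ is a genuine degree-lowering projection when $\alpha_S=\beta+1$, it nonetheless fixes the traces of $RM$ functions precisely because $\beta\ge1$. This is exactly the hypothesis that fails in the excluded case $\beta=0$, where $RM$ traces are affine but $Q_b$ projects onto constants, which is why that case must be treated separately.
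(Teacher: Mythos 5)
Your proof is correct and follows essentially the same route as the paper's: reuse the Darcy-side and $D(\vv_0)=\vzero$ arguments from Lemma \ref{lem:vV-h-norm-1}, then observe that $RM\subset[P_1(K)]^2$ together with $\beta\ge 1$ forces $Q_b\vv_0=\vv_0$ on edges, restoring global continuity of $\vv_0$ and hence $\vv_0|_{\Omega_S}\in RM$, after which the boundary data kills $\vv_0$. Your closing remark correctly identifies why the case $\beta=0$ must be excluded, which is exactly the paper's reason for treating it separately.
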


Finally, we consider the case of $\alpha_S = \beta+1$ and $\beta=0$.
When $\beta=0$, Equation (\ref{eq:well-posedness1-S}) implies that
$\vv_0$ is continuous only at the center of internal edges in
$\T_h^S$. On boundary edges $e\in \E_h\cap\partial\Omega_S$, the
value of $\vv_0$ at the center is equal to the value of $\vv_b$,
which is $\vzero$. If $\T_h^S$ is a triangular mesh, clearly $\vv_0$
can be viewed as in the space of the lowest order Crouzeix-Raviart
non-conforming finite element defined on the triangular mesh
$\T_h^S$ with zero boundary condition. In this case, Falk
\cite{Falk91} has shown, using dimension counting, that $\vv_0\in
RM$ on all $K\in \T_h^S$ is not enough to guarantee $\vv_0 \equiv
\vzero$. This is just the famous result that the lowest order
Crouzeix-Raviart non-conforming finite element does not satisfy the
discrete Korn's inequality.

However, the situation can be different when the mesh contains general polygons.
To analyze this, we first introduce a few tools.

\begin{lemma} \label{lem:A1}
Let $\vv_0 \in RM$ on a polygon $K$. If $\vv_0$ vanishes on two different points in $K$, then $\vv_0\equiv \vzero$.
\end{lemma}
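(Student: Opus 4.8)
The plan is to use the explicit three-parameter description of the rigid body motions and turn the two vanishing conditions into a small homogeneous linear system whose only solution is the trivial one. Writing $\vv_0 \in RM$ in the given basis, we have
\[
\vv_0(x,y) = \begin{bmatrix} a \\ b \end{bmatrix} + c\begin{bmatrix} -y \\ x \end{bmatrix} = \begin{bmatrix} a - cy \\ b + cx \end{bmatrix}
\]
for some constants $a, b, c \in \bbR$, and the goal is to show that vanishing of $\vv_0$ at two distinct points forces $a = b = c = 0$.

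First I would label the two points $P_1 = (x_1, y_1)$ and $P_2 = (x_2, y_2)$ with $P_1 \neq P_2$, and write out the four scalar equations coming from $\vv_0(P_1) = \vzero$ and $\vv_0(P_2) = \vzero$, namely $a - c y_i = 0$ and $b + c x_i = 0$ for $i = 1, 2$. Subtracting the two first-component equations and the two second-component equations eliminates $a$ and $b$ and yields $c(y_2 - y_1) = 0$ and $c(x_2 - x_1) = 0$.

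The key observation is that $P_1 \neq P_2$ means at least one of $x_2 - x_1$ and $y_2 - y_1$ is nonzero, so the two relations above immediately give $c = 0$. Substituting back into $a - c y_1 = 0$ and $b + c x_1 = 0$ then gives $a = b = 0$, hence $\vv_0 \equiv \vzero$.

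There is essentially no hard step here; the only point that must be used with care is that distinctness of the points \emph{in} $\bbR^2$, as opposed to distinctness of a single coordinate, is exactly what guarantees that one of the coordinate differences is nonzero. This matches the geometric picture and serves as a sanity check: a nonzero element of $RM$ is either a nonzero constant translation, which vanishes nowhere, or a genuine rotation with $c \neq 0$ about the single center $(-b/c,\, a/c)$, which vanishes at exactly one point; in either case it cannot vanish at two distinct points.
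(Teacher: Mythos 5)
Your proof is correct and follows essentially the same route as the paper: write $\vv_0 = (a-cy,\, b+cx)$, use the two vanishing conditions and the fact that distinct points differ in at least one coordinate to force $c=0$, and then conclude $a=b=0$. You simply spell out the elimination step that the paper leaves as ``it is not hard to see.''
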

{\bf Proof} Any $\vv_0\in RM$ can be written as $\begin{bmatrix}a-cy\\b+cx\end{bmatrix}$.
Two different points either have different $x$-coordinates or $y$-coordinates. If $\vv_0$ vanishes
on both points, it is not hard to see that $c$ must be $0$. Consequently, $a$ and $b$ must also be $0$.
This completes the proof of the lemma.
$\Box$

\begin{algorithm} \label{alg:meshsweeping}
We start from setting all polygons in $\T_h^S$ black.
\begin{enumerate}
\item For all $K\in\T_h^S$, set $K$ white if $K$ has two different edges lying on $\partial\Omega_S$;
\item For all black polygons,
  \begin{itemize}
  \item Set polygon $K$ white if $K$ has one edge lying on $\partial\Omega_S$ and shares another edge with a white polygon;
  \item Set polygon $K$ white if $K$ shares two edges with other white polygons;
  \end{itemize}
\item Repeat Step 2 until there is no new coloring.
\end{enumerate}
\end{algorithm}

\begin{definition}
Mesh $\T_h^S$ is colorable if Algorithm \ref{alg:meshsweeping} will turn all polygons in $\T_h^S$ white.
Otherwise, it is not colorable.
\end{definition}

Clearly, a simple example of colorable mesh is the rectangular grid.
We know that $\vv_0$ vanishes at the center of each $e\in \E_h\cap
\partial \Omega_S$. Thus, the property stated in Lemma \ref{lem:A1}
can propagate to all white polygons generated by Algorithm
\ref{alg:meshsweeping}. This leads to the following conclusion:

\begin{theorem}
When $\beta = 0$ and $\alpha_S = 1$, if $\T_h^S$ is colorable, then $\|\cdot\|_{\vV_h}$ is a well-defined norm.
\end{theorem}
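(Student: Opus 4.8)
The plan is to run the coloring in Algorithm \ref{alg:meshsweeping} as an induction in which ``$K$ is white'' is interpreted to mean ``we have already deduced $\vv_0 \equiv \vzero$ on $K$.'' As in the discussion preceding Lemma \ref{lem:vV-h-norm-2}, starting from $\|\vv\|_{\vV_h} = 0$ I would first extract from (\ref{eq:well-posedness1-D}) and (\ref{eq:well-posedness1-I}) that $\vv_0 = \vv_b \equiv \vzero$ on the Darcy side and that $\vv_b$ vanishes on $\Gamma_{SD}$, and from the weak-gradient computation that $\vv_0|_K \in RM$ for every $K \in \T_h^S$. The only new ingredient for $\beta = 0$ is the reinterpretation of (\ref{eq:well-posedness1-S}): since $Q_b$ now projects onto $[P_0(e)]^2$, the value $Q_b\vv_0|_e$ is the mean of the affine function $\vv_0$ over $e$, which equals its value at the midpoint of $e$. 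Thus (\ref{eq:well-posedness1-S}) says exactly that the edge-midpoint values of $\vv_0$ agree across each interior edge of $\T_h^S$ and equal $\vv_b$ on $\partial\Omega_S$. Because $\vv_b = \vzero$ on all of $\partial\Omega_S = \Gamma_S \cup \Gamma_{SD}$, the midpoint value of $\vv_0$ is zero on every boundary edge.

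Next I would verify that each coloring rule is justified by Lemma \ref{lem:A1}. For Step 1, a polygon with two distinct edges on $\partial\Omega_S$ has $\vv_0$ vanishing at the two distinct midpoints of those edges, so $\vv_0 \equiv \vzero$ on it. For the inductive Step 2, whenever $K$ shares an edge $e$ with an already-white polygon, the white polygon forces the midpoint value of $\vv_0$ on its side of $e$ to be zero, and the matching condition from (\ref{eq:well-posedness1-S}) transfers this zero to $K$'s side of $e$; likewise each edge of $K$ lying on $\partial\Omega_S$ supplies a midpoint at which $\vv_0$ vanishes. In both sub-cases of Step 2 this furnishes two distinct vanishing points on $K$, so Lemma \ref{lem:A1} again yields $\vv_0 \equiv \vzero$ on $K$, justifying turning $K$ white.

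Since $\T_h^S$ is colorable, the algorithm terminates with every polygon white, i.e.\ $\vv_0 \equiv \vzero$ on all of $\Omega_S$. Feeding this back into (\ref{eq:well-posedness1-S}) gives $\vv_b = Q_b\vv_0 = \vzero$ on every $e \in \E_h^S \cup \E_h^{SD}$, and combined with the vanishing on the Darcy side we conclude $\vv \equiv \vzero$, which is exactly what is needed to show that $\|\cdot\|_{\vV_h}$ is a norm. The main obstacle is the bookkeeping in the inductive step: one must confirm that the two midpoints produced at each coloring stage are genuinely distinct points of $K$ (they are, being midpoints of two different edges of the same polygon) and that the midpoint-matching truly transports the zero value across a shared edge regardless of which side computes it. Everything else is a routine propagation argument, and the colorability hypothesis is precisely the combinatorial condition guaranteeing that the propagation reaches every element of $\T_h^S$.
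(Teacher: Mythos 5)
Your proposal is correct and follows essentially the same route as the paper: reduce to $\vv_0|_K \in RM$ with matching midpoint values across edges, invoke Lemma \ref{lem:A1} to turn any two vanishing midpoints into $\vv_0 \equiv \vzero$ on a polygon, and let the colorability of $\T_h^S$ under Algorithm \ref{alg:meshsweeping} propagate this to all of $\Omega_S$. The only difference is that you spell out the bookkeeping (midpoint = edge average for affine functions, distinctness of the two midpoints, transfer of the zero across a shared edge) that the paper leaves implicit.
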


\begin{remark}
In the rest of this paper, when $\beta = 0$ and $\alpha_S = 1$, we always assume that the mesh is colorable.
In other words, $\|\cdot\|_{\vV_h}$ is always well-defined in this paper.
\end{remark}
\smallskip

\subsection{Existence and uniqueness of the discrete solution}
Given that $\|\cdot\|_{\vV_h}$ is a well-defined norm in $\vV_h$, we can easily derive the existence and uniqueness
of the solution to System (\ref{eq:weakGalerkinformulation}).

\begin{theorem} \label{thm:well-posedness}
System (\ref{eq:weakGalerkinformulation}) admits a unique solution.
\end{theorem}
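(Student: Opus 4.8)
The plan is to exploit that (\ref{eq:weakGalerkinformulation}) is a square finite-dimensional linear system, posed on $\vV_h\times\Psi_h$ with matching trial and test spaces. For such a system, existence of a solution for every right-hand side is equivalent to uniqueness, so it suffices to show that the homogeneous problem (take $\vf=\vzero$, $g=0$) has only the trivial solution $(\vu,p)=(\vzero,0)$.

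First I would show $\vu=\vzero$. Setting $\vv=\vu$ in the first equation and $q=p$ in the second of the homogeneous system and subtracting, the two copies of $b_h(\vu,p)$ cancel and we are left with $a_h(\vu,\vu)=0$. Since $a_h(\vv,\vv)=\|\vv\|_{\vV_h}^2$ and, under the standing assumptions, $\|\cdot\|_{\vV_h}$ is a genuine norm on $\vV_h$ by Lemma \ref{lem:vV-h-norm-1}, Lemma \ref{lem:vV-h-norm-2} and the colorable-mesh case, this forces $\|\vu\|_{\vV_h}=0$ and hence $\vu=\vzero$.

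With $\vu=\vzero$, the first equation collapses to $b_h(\vv,p)=0$ for all $\vv\in\vV_h$, and $p=0$ follows as soon as the map $p\mapsto b_h(\cdot,p)$ is injective on $\Psi_h$. I would obtain this from a discrete inf-sup (LBB) condition: there is $\beta_0>0$, independent of $h$, with
\[
\sup_{\vzero\neq\vv\in\vV_h}\frac{b_h(\vv,q)}{\|\vv\|_{\vV_h}}\;\gtrsim\;\|q\|\qquad\textrm{for all }q\in\Psi_h .
\]
Given this, $b_h(\vv,p)=0$ for all $\vv$ immediately yields $\|p\|=0$.

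The main obstacle is this discrete inf-sup condition, which is exactly where the parameter constraints (\ref{eq:parameters}) and the definition (\ref{eq:weakdiv}) of the weak divergence enter. I would prove it by a Fortin argument anchored on the continuous inf-sup for $b(\cdot,\cdot)$ on $\vV\times\Psi$ (available from \cite{Layton03}): for a given $q\in\Psi_h\subset\Psi$ it supplies $\vw\in\vV$ with $-(\nabla\cdot\vw,q)\gtrsim\|q\|^2$ and $\|\vw\|_{\vV}\lesssim\|q\|$. Taking the $L^2$ projection $Q_h\vw\in\vV_h$ as the Fortin interpolant, one checks from (\ref{eq:weakdiv}) and integration by parts that $(\nabla_w\cdot(Q_h\vw),q)=(\nabla\cdot\vw,q)$ for every $q\in\Psi_h$, i.e. $b_h(Q_h\vw,q)=b(\vw,q)$; this moment matching is precisely what $\alpha_D=\beta$ together with $\gamma_S,\gamma_D\le\beta$ guarantee. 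The remaining and most delicate point is the $\vV_h$-stability $\|Q_h\vw\|_{\vV_h}\lesssim\|\vw\|_{\vV}$, for which one must bound the weak-strain term $D_w(Q_h\vw)$ and the stabilization $s(\cdot,\cdot)$ by $\|\vw\|_{\vV}$ via trace and inverse inequalities on the shape-regular polygonal mesh; inserting $\vv=Q_h\vw$ into the supremum then yields the inf-sup bound and finishes the proof of uniqueness, hence of well-posedness.
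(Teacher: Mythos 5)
Your proof is correct, and its first half coincides with the paper's: test with $\vv=\vu$, $q=p$, subtract, and use $a_h(\vu,\vu)=\|\vu\|_{\vV_h}^2$ together with Lemmas \ref{lem:vV-h-norm-1} and \ref{lem:vV-h-norm-2} (and the colorable-mesh case for $\beta=0$, $\alpha_S=1$) to get $\vu\equiv\vzero$. Where you genuinely diverge is the pressure. You deduce $p=0$ from the discrete inf-sup condition, obtained by a Fortin argument; the paper instead argues directly and more elementarily. It expands $0=(\nabla_w\cdot\vv,p)=\sum_{K\in\T_h}\left(-(\vv_0,\nabla p)_K+<\vv_b\cdot\vn,p>_{\partial K}\right)$, which is legitimate because $\gamma_S,\gamma_D\le\beta$; taking $\vv_b=\vzero$ and $\vv_0$ supported on one polygon forces $\nabla p|_K=\vzero$ (since $\nabla p$ lies in the local $\vv_0$ space), so $p$ is piecewise constant, and then taking $\vv_0=\vzero$ annihilates the jumps $[p]$ across all edges (here the fact that $\vv_b\cdot\vn$ ranges over the same edge space on both subdomains matters), whence $p$ is a global constant and vanishes in $L_0^2(\Omega)$. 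Both routes work. The paper does prove the discrete inf-sup condition (Lemma \ref{lem:inf-sup}) by essentially your construction --- lift $q$ to $\vw\in[H_0^1(\Omega)]^2$ with $\nabla\cdot\vw=q$ and $\|\vw\|_1\lesssim\|q\|$, set $\vv=Q_h\vw$, use the commuting identity $\nabla_w\cdot(Q_h\vw)=\pi_h(\nabla\cdot\vw)$ of Lemma \ref{lem:projections}, and verify $\|Q_h\vw\|_{\vV_h}\lesssim\|\vw\|_1$ by trace and projection estimates --- but only later, where it is needed for the pressure error bound; your proof front-loads that machinery, while the paper keeps the well-posedness argument self-contained and purely algebraic. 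One minor imprecision in your sketch: what makes the commuting identity work is $\beta-1\le\alpha_S,\alpha_D$ (together with $\gamma_S,\gamma_D\le\beta$ for the moment matching), not specifically the choice $\alpha_D=\beta$, which is imposed in (\ref{eq:parameters}) for accuracy reasons rather than for the inf-sup condition.
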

{\bf Proof} For discrete problems, uniqueness implies existence of
the solution. Therefore we only need to prove that when $\vf\equiv
\vzero$ and $q\equiv 0$, the solution to
(\ref{eq:weakGalerkinformulation}) is exactly zero. By setting $\vv
= \vu$ and $q = p$ and subtracting the two equations in
(\ref{eq:weakGalerkinformulation}), one has $0=a_h(\vu, \vu) =
\|\vu\|_{\vV_h}^2$, which clearly implies $\vu\equiv\vzero$.

Next, by using $0=b_h(\vv, p) = -(\nabla_w\cdot\vv, p)$ for all
$\vv\in \vV_h$, we will show that $p\equiv 0$ on the entire
$\Omega$. Since $\gamma_S\le\beta$ and $\gamma_D\le \beta$, by
(\ref{eq:weakdiv}) we have for all $\vv\in \vV_h$
\begin{equation} \label{eq:well-posedness-p}
0 = (\nabla_w\cdot\vv, p) = \sum_{K \in\T_h} \left(-(\vv_0, \nabla p)_K + <\vv_b\cdot\vn, p>_{\partial K}\right).
\end{equation}
By setting $\vv_b = \vzero$ on all edges and let $\vv_0$ vanish on
all except for one polygon in $\T_h$, one gets
$$
(\vv_0, \nabla p)_K = 0\textrm{ for all } \vv\in \vV_h \textrm{ and }K\in \T_h.
$$
Note that for polygons lying on either the Darcy side or the Stokes
side, according to the definition of $\vV_h$ and $\Psi_h$, the space
of $\nabla p$ is always contained in the space of $\vv_0$. Thus we
conclude that $\nabla p|_K = \vzero$ for all $K\in \T_h$, which
implies that $p$ is piecewise constant.

Next, let $\vv_0\equiv \vzero$ in Equation (\ref{eq:well-posedness-p}), we have for all $\vv_b$
\begin{equation} \label{eq:well-posedness-p2}
0 = \sum_{K \in\T_h} <\vv_b\cdot\vn, p>_{\partial K} = \sum_{e\in \E_h} <\vv_b\cdot\vn, [p]>_e,
\end{equation}
where $[p]$ denotes the jump of $p$ on edge $e$. On boundary edges,
$[p]$ is just defined to be the one-sided value of $p$. Note that
the summation in (\ref{eq:well-posedness-p2}) does not need to be
distinguished on the Darcy or the Stokes side, because $\vv_b\cdot
\vn$ for both the Darcy side and the Stokes side belongs to the same
discrete space. Combining Equation (\ref{eq:well-posedness-p2}) with
the fact that $p$ is piecewise constant, we conclude that $p$ must
be a constant on the entire $\Omega$. And since $p\in
L_0^2(\Omega)$, thus $p\equiv 0$. This completes the proof of the
theorem. $\Box$
\smallskip

\subsection{A few technique tools}
In this subsection we introduce a few technique tools that will be
used in the error analysis of the weak Galerkin approximation.
First, for any $K\in \T_h$ and $e$ being an edge of $K$, the
following trace inequality is known \cite{MuWangWang,wy-mixed}
\begin{equation}\label{eq:boundary2interior}
\|\phi\|_{e}^2 \lesssim  h_K^{-1} \|\phi\|_K^2 + h_K |\phi|_{1,K}^2,
\end{equation}
for all $\phi\in H^1(K)$. Unlike the usual trace inequalities, the
inequality (\ref{eq:boundary2interior}) is prove on polytopal meshes
satisfying certain shape regularity conditions
\cite{MuWangWang,wy-mixed}. For such meshes, the inverse inequality
and the approximation property of $L^2$ projections onto polynomial
spaces have also been proved \cite{MuWangWang,wy-mixed}. These
inequalities have the same form as their counterparts on triangular
and rectangular meshes. In the rest of this paper, we will use them
directly, without special mentioning.

On each $K\in \T_h$, denote by $\Pi_h$ and $\pi_h$ the $L^2$
projections onto $[P_{\beta}(K)]^{2\times 2}$ and $P_{\beta}(K)$,
respectively. And on the entire $\Omega$, we use the same notations,
$\Pi_h$ and $\pi_h$, to denote the combination of all local
projections. Then one has

\begin{lemma} \label{lem:projections}
The projection operators satisfy
$$
\begin{aligned}
\nabla_w(Q_h \bv) &= \Pi_h (\nabla \bv) \qquad &&\textrm{for all }\vv\in [H^1(\Omega)]^2, \\
\nabla_w\cdot (Q_h\vv) &= \pi_h (\nabla\cdot \bv) \qquad && \textrm{for all }\vv\in H(\div,\Omega).
\end{aligned}
$$
\end{lemma}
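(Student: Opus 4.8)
The plan is to verify both identities \emph{elementwise}, since $\nabla_w$, $\Pi_h$ and $\pi_h$ are all defined on each $K\in\T_h$ separately. On a fixed $K$ I would substitute $Q_h\vv=\{Q_0\vv,Q_b\vv\}$ into the defining relations (\ref{eq:weakgrad}) and (\ref{eq:weakdiv}), reduce the two resulting right-hand-side terms to the corresponding integrals of $\vv$ itself by using that the $L^2$ projections $Q_0,Q_b$ act as the identity on the polynomial spaces that actually appear, and finally recognize the outcome as the ordinary (tensor, resp. $H(\div)$) Green's formula applied to $\vv$. Matching against the definitions of the projections $\Pi_h$ and $\pi_h$ then closes each case.

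For the first identity only $K\in\T_h^S$ is relevant, since the weak gradient is used only there. Testing against an arbitrary $\tau\in[P_{\beta}(K)]^{2\times 2}$, the definition (\ref{eq:weakgrad}) gives $(\nabla_w(Q_h\vv),\tau)_K=-(Q_0\vv,\nabla\cdot\tau)_K+\langle Q_b\vv,\tau\vn\rangle_{\partial K}$. Now $\nabla\cdot\tau\in[P_{\beta-1}(K)]^2$, and since $\alpha_S\ge\beta$ by (\ref{eq:parameters}) this lies in the range of $Q_0$, so $(Q_0\vv,\nabla\cdot\tau)_K=(\vv,\nabla\cdot\tau)_K$; likewise $\tau\vn|_e\in[P_{\beta}(e)]^2$ lies in the range of $Q_b$ on the Stokes side, whence $\langle Q_b\vv,\tau\vn\rangle_{\partial K}=\langle\vv,\tau\vn\rangle_{\partial K}$. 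The right-hand side is then exactly the tensor Green's formula $(\nabla\vv,\tau)_K$ for $\vv\in[H^1(K)]^2$. As $\nabla_w(Q_h\vv)\in[P_{\beta}(K)]^{2\times 2}$ and the identity $(\nabla_w(Q_h\vv),\tau)_K=(\nabla\vv,\tau)_K$ holds for every $\tau$ in that space, the defining property of $\Pi_h$ yields $\nabla_w(Q_h\vv)=\Pi_h(\nabla\vv)$.

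The second identity is treated the same way on every $K\in\T_h$. For $q\in P_{\beta}(K)$, (\ref{eq:weakdiv}) gives $(\nabla_w\cdot(Q_h\vv),q)_K=-(Q_0\vv,\nabla q)_K+\langle Q_b\vv\cdot\vn,q\rangle_{\partial K}$. Here $\nabla q\in[P_{\beta-1}(K)]^2$ sits in the range of $Q_0$ because $\alpha_S\ge\beta$ and $\alpha_D=\beta$, so the volume term becomes $-(\vv,\nabla q)_K$. For the boundary term I would exploit that each edge $e$ of a polygon is straight, so $\vn$ is constant on $e$: on the Stokes side $\langle Q_b\vv\cdot\vn,q\rangle_e=\langle Q_b\vv,q\vn\rangle_e=\langle\vv,q\vn\rangle_e=\langle\vv\cdot\vn,q\rangle_e$ since $q\vn\in[P_{\beta}(e)]^2$, and on the Darcy side, where $Q_b$ projects only the normal component onto $P_{\beta}(e)\vn_e$, the same chain holds after pulling the constant factor $\vn\cdot\vn_e=\pm 1$ out of the pairing. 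Thus the right-hand side equals $-(\vv,\nabla q)_K+\langle\vv\cdot\vn,q\rangle_{\partial K}=(\nabla\cdot\vv,q)_K$ by Green's formula, and comparing with the definition of $\pi_h$ gives $\nabla_w\cdot(Q_h\vv)=\pi_h(\nabla\cdot\vv)$.

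The routine part is the degree bookkeeping, which is precisely what the constraints (\ref{eq:parameters}) guarantee: every derivative of a degree-$\beta$ test object drops to degree $\beta-1$ and must remain in the range of $Q_0$, while every trace must remain in the range of $Q_b$. The one genuinely delicate point I expect is the divergence identity for $\vv\in H(\div,K)$ that does not belong to $[H^1(K)]^2$: there $\vv\cdot\vn$ only lives in $H^{-1/2}(\partial K)$, so both $\langle\vv\cdot\vn,q\rangle_{\partial K}$ and the action of $Q_b$ on the normal trace must be read as duality pairings rather than genuine integrals, and Green's formula must be invoked in its distributional $H(\div)$ form. Once the pairings are interpreted in this sense, the Darcy-side normal-only projection remains compatible exactly because $\vn$ is piecewise constant, and the argument carries through unchanged.
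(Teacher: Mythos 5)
Your proposal is correct and follows essentially the same route as the paper: substitute $Q_h\vv$ into the definitions \eqref{eq:weakgrad}--\eqref{eq:weakdiv}, use the $L^2$-orthogonality of $Q_0$ and $Q_b$ against test objects whose degrees stay in range by \eqref{eq:parameters}, apply Green's formula, and identify the result with $\Pi_h(\nabla\vv)$ and $\pi_h(\nabla\cdot\vv)$. Your extra remarks on the Darcy-side normal-only projection and on reading the boundary terms as duality pairings for general $\vv\in H(\div,\Omega)$ are more careful than the paper's two-line computation, but do not change the argument.
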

{\bf Proof}
By (\ref{eq:parameters}), (\ref{eq:weakgrad}), the definitions of $Q_h$ and $\Pi_h$,
clearly for all $\tau\in [P_{\beta}(K)]^{2\times 2}$ and $K\in \T_h$,
$$
\begin{aligned}
(\nabla_w(Q_h \vv), \tau)_K &= -(Q_0\vv, \nabla\cdot\tau)_K + <Q_b\vv, \tau\vn>_{\partial K} \\
&= -(\vv, \nabla\cdot\tau)_K + <\vv, \tau\vn>_{\partial K} \\
&= (\nabla \vv, \tau)_K = (\Pi_h (\nabla\vv), \tau)_K.
\end{aligned}
$$
Similarly, for all $q\in P_{\beta}(K)$ and $K\in \T_h$,
$$
\begin{aligned}
(\nabla_w\cdot (Q_h\vv), q)_K &= -(Q_0\vv, \nabla q)_K + <(Q_b\vv)\cdot\vn, q>_{\partial K} \\
&= -(\vv, \nabla q)_K + <\vv\cdot\vn, q>_{\partial K} \\
&= (\nabla\cdot \vv, q)_K = (\pi_h \nabla\cdot \vv, q)_K.
\end{aligned}
$$
This completes the proof of the lemma.
$\Box$
\medskip

Next we prove the discrete inf-sup condition:
\begin{lemma} \label{lem:inf-sup}
For all $q\in \Psi_h$, one has
$$
\sup_{\vv\in \vV_h} \frac{(\nabla_w\cdot\vv, q)}{\|\vv\|_{\vV_h}} \gtrsim \|q\|.
$$
\end{lemma}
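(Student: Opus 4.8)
The plan is to establish the discrete inf-sup condition by the standard Fortin-operator strategy: first invoke the continuous inf-sup condition for the coupled Darcy-Stokes problem (available from the well-posedness of \eqref{eq:mixedweakformulation} proved in \cite{Layton03}), which guarantees that for any $q\in\Psi_h\subset\Psi$ there exists a function $\vw\in\vV$ with $\nabla\cdot\vw = -q$ (or more precisely achieving the supremum up to a constant) and $\|\vw\|_{\vV}\lesssim\|q\|$. The idea is then to transfer this continuous velocity $\vw$ into the discrete space via the projection $Q_h$, and use Lemma \ref{lem:projections} to relate the discrete weak divergence of $Q_h\vw$ back to the continuous divergence of $\vw$.

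Concretely, I would take $\vv = Q_h\vw\in\vV_h$ as the test function. By the commuting property in Lemma \ref{lem:projections}, $\nabla_w\cdot(Q_h\vw) = \pi_h(\nabla\cdot\vw)$. Since $q\in\Psi_h$ consists of piecewise polynomials of degree at most $\gamma_S$ or $\gamma_D$, and since the parameters satisfy $\gamma_S\le\beta$ and $\gamma_D\le\beta$ from \eqref{eq:parameters}, the projection $\pi_h$ onto $P_\beta$ acts as the identity when tested against $q$; that is, $(\pi_h(\nabla\cdot\vw),q) = (\nabla\cdot\vw,q) = -(q,q) = -\|q\|^2$. Hence the numerator $(\nabla_w\cdot\vv,q)$ equals $\|q\|^2$ up to sign. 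The remaining task is to bound the denominator by showing $\|Q_h\vw\|_{\vV_h}\lesssim\|\vw\|_{\vV}$, which then yields the desired lower bound $\|q\|^2/\|Q_h\vw\|_{\vV_h}\gtrsim\|q\|$.

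The main obstacle is the stability estimate $\|Q_h\vw\|_{\vV_h}\lesssim\|\vw\|_\vV$, because the discrete norm $\|\cdot\|_{\vV_h}$ contains several distinct pieces that must each be controlled. On the Stokes side, $2\nu\|D_w(Q_h\vw)\|_{\Omega_S}^2$ can be handled using Lemma \ref{lem:projections} together with the fact that $D_w(Q_h\vw)$ relates to $\Pi_h(\nabla\vw)$, bounding it by $\|\vw\|_{1,\Omega_S}$. The stabilization terms $h_K^{-1}\|Q_b(Q_0\vw)-Q_b\vw\|_{\partial K}^2$ require the trace inequality \eqref{eq:boundary2interior} and the approximation property of the $L^2$ projection $Q_0$: since $Q_b(Q_0\vw)-Q_b\vw = Q_b(Q_0\vw-\vw)$, I would estimate $h_K^{-1}\|Q_0\vw-\vw\|_{\partial K}^2\lesssim h_K^{-2}\|Q_0\vw-\vw\|_K^2 + |Q_0\vw-\vw|_{1,K}^2\lesssim\|\vw\|_{1,K}^2$, using standard approximation bounds. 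On the Darcy side, the term $\|\bbK^{-1/2}Q_0\vw\|_{\Omega_D}$ is bounded by $\|\vw\|_{\Omega_D}$ by boundedness of the projection and of $\bbK^{-1}$, while the normal-component stabilization $h_K^{-1}\|(Q_0\vw-Q_b\vw)\cdot\vn\|_{\partial K}^2$ is controlled analogously, this time exploiting $\|\vw\|_{H(\div,\Omega_D)}$ rather than a full $H^1$ bound.

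Finally, the weak-divergence term $\|\nabla_w\cdot(Q_h\vw)\|_\Omega = \|\pi_h(\nabla\cdot\vw)\|_\Omega\le\|\nabla\cdot\vw\|_\Omega$ is immediate from Lemma \ref{lem:projections} and boundedness of $\pi_h$, and the interface term $\|\mu^{1/2}\bbK^{-1/4}(Q_b\vw)\cdot\hvt\|_{\Gamma_{SD}}$ is bounded via a trace inequality on $\Gamma_{SD}$ together with the uniform bounds on $\mu$ and $\bbK$. Collecting all pieces gives $\|Q_h\vw\|_{\vV_h}\lesssim\|\vw\|_\vV\lesssim\|q\|$, which combined with the numerator estimate completes the proof. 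I expect the subtlest point to be ensuring that the approximation and trace estimates on the Darcy side use only the $H(\div)$ regularity of $\vw$ (not $H^1$), since $\vw_D$ need not belong to $H^1(\Omega_D)^2$; the design choice $\alpha_D=\beta$ in \eqref{eq:parameters} should be exactly what makes these Darcy-side bounds close without loss of order.
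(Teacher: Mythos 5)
Your overall strategy — lift $q$ to a continuous velocity $\vw$, set $\vv=Q_h\vw$, use the commuting property of Lemma \ref{lem:projections} to get $(\nabla_w\cdot\vv,q)=\|q\|^2$, and then prove the stability bound $\|Q_h\vw\|_{\vV_h}\lesssim\|\vw\|$ term by term — is exactly the paper's strategy, and the Stokes-side and divergence/interface estimates you sketch match the paper's. However, there is a genuine gap in your choice of the lifting. You take $\vw\in\vV$, so that on the Darcy side $\vw$ is only in $H(\div,\Omega_D)$, and you defer the resulting difficulty with the remark that the Darcy-side bounds should ``use only the $H(\div)$ regularity.'' They cannot. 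The stabilization term $h_K^{-1}\|(Q_0\vw-Q_b\vw)\cdot\vn\|_{\partial K}^2$ requires an $L^2(\partial K)$ bound on $(Q_0\vw-\vw)\cdot\vn$, but for a function that is merely in $H(\div,K)$ the normal trace lives only in $H^{-1/2}(\partial K)$, so this quantity is not even well defined, and the trace inequality \eqref{eq:boundary2interior} you would need applies only to $H^1(K)$ functions. Likewise $Q_b\vw$ on an edge presupposes an $L^2(e)$ trace. The parameter choice $\alpha_D=\beta$ has nothing to do with this; it does not supply the missing regularity.

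The paper sidesteps the issue entirely by not using the continuous inf-sup condition for the coupled problem at all: it invokes the classical surjectivity of the divergence from $[H_0^1(\Omega)]^2$ onto $L_0^2(\Omega)$, producing a lifting $\vw\in[H_0^1(\Omega)]^2$ with $\nabla\cdot\vw=q$ and $\|\vw\|_1\lesssim\|q\|$. With full $H^1$ regularity on the \emph{whole} of $\Omega$, the trace inequality \eqref{eq:boundary2interior} and the $L^2$-projection approximation estimates apply on the Darcy side exactly as on the Stokes side, and the estimate $h_K^{-1}\|Q_0\vw-\vw\|_{\partial K}^2\lesssim\|\nabla\vw\|_K^2$ closes the argument. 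Your proof becomes correct if you simply replace the lifting $\vw\in\vV$ by this $H^1_0$ lifting; as written, the Darcy-side stability step fails.
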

{\bf Proof}
It is well known that for all $q\in \Psi_h\subset L_0^2(\Omega)$, there exists a $\vw\in [H_0^1(\Omega)]^2$ such that
$\nabla\cdot\vw = q$ and $\|\vw\|_1\lesssim \|q\|$. Define $\vv = Q_h\vw$, then by Lemma \ref{lem:projections},
and the facts that $\gamma_S\le \beta$, $\gamma_D\le \beta$, we have
$$
(\nabla_w\cdot\vv, q) = (\pi_h (\nabla\cdot \vw), q) = (\nabla\cdot \vw, q) = \|q\|^2.
$$
Now, we only need to prove that
$$
\|\vv\|_{\vV_h} \lesssim \|\vw\|_1.
$$

Note that
$$
\begin{aligned}
\|\vv\|_{\vV_h} &= \bigg(2\nu\|D_w(Q_h\vw)\|_{\Omega_S}^2 + \rho_S\sum_{K\in\T_h^S} h_K^{-1}\|Q_b(Q_0\vw)-Q_b\vw\|_{\partial K}^2 \\
&\qquad + \|\bbK^{-1/2}Q_0\vw\|_{\Omega_D}^2 + \rho_D\sum_{K\in\T_h^D} h_K^{-1} \|(Q_0\vw-Q_b\vw)\cdot\vn\|_{\partial K}^2 \\
&\qquad + \|\nabla_w\cdot(Q_h\vw)\|_{\Omega}^2 + \|\mu^{1/2}\bbK^{-1/4}(Q_b\vw)\cdot\hvt\|_{\Gamma_{SD}}^2 \bigg)^{1/2}.
\end{aligned}
$$
We will check the terms in the above equation one-by-one. First, by
Lemma \ref{lem:projections},
$$
\begin{aligned}
\|D_w(Q_h\vw)\|_{\Omega_S}^2 &= \|\frac{1}{2}(\nabla_w(Q_h\vw)+\nabla_w(Q_h\vw)^T)\|_{\Omega_S}^2
= \|\frac{1}{2}(\Pi_h\nabla\vw + (\Pi_h\nabla\vw)^T)\|_{\Omega_S}^2 \\
&= \|\Pi_h D(\vw)\|_{\Omega_S}^2 \lesssim \|\vw\|_{1,\Omega_S}^2,
\end{aligned}
$$
and obviously $\|\bbK^{-1/2}Q_0\vw\|_{\Omega_D} \lesssim \|\vw\|_{\Omega_D}$.
Next, by using the properties of $Q_h$ and the inequality (\ref{eq:boundary2interior}), we have for $K\in\T_h^S$,
$$
\begin{aligned}
h_K^{-1}\|Q_b(Q_0\vw)-Q_b\vw\|_{\partial K}^2 &= h_K^{-1}\|Q_b(Q_0\vw-\vw)\|_{\partial K}^2\\
&\le h_K^{-1}\|Q_0\vw-\vw\|_{\partial K}^2 \\
&\lesssim h_K^{-2} \|Q_0\vw-\vw\|_{K}^2 + \|\nabla(Q_0\vw-\vw)\|_K^2 \\
&\lesssim \|\nabla\vw\|_K^2.
\end{aligned}
$$
Similarly, one can show that for $K\in\T_h^D$,
$$
\begin{aligned}
h_K^{-1} \|(Q_0\vw-Q_b\vw)\cdot\vn\|_{\partial K}^2 &\lesssim
h_K^{-1}\|Q_0\vw-Q_b\vw\|_{\partial K}^2 \\ &\lesssim
h_K^{-1}\|Q_0\vw-\vw\|_{\partial K}^2
 \lesssim \|\nabla\vw\|_K^2.
 \end{aligned}
$$
By using Lemma \ref{lem:projections}, we have
$$
\|\nabla_w\cdot (Q_h\vw)\|_{\Omega}  = \|\pi_h(\nabla\cdot\vw)\|_{\Omega} \le \|\nabla\cdot\vw\|_{\Omega} \le \|\nabla\vw\|_{\Omega}.
$$
Finally, using the trace inequality,
$$
\|\mu^{1/2}\bbK^{-1/4}(Q_b\vw)\cdot\hvt\|_{\Gamma_{SD}} \lesssim \|Q_b\vw\|_{\Gamma_{SD}}
\le \|\vw\|_{\Gamma_{SD}} \lesssim \|\vw\|_{1,\Omega}.
$$
Combining all the above, we have $\|\vv\|_{\vV_h} \lesssim
\|\vw\|_1$, which completes the proof of the lemma. $\Box$
\medskip

By using Lemma \ref{lem:projections}, we can also prove the following result:
\begin{lemma} \label{lem:eq1}
The solution $\vu$ and $p$ to problem (\ref{eq:eq11}) satisfies
$$
a_h(Q_h \vu, \vv) + b_h(\vv, \bbQ_h p) = (f, \vv_0) + s(Q_h\vu, \vv)
+ l_S(\vv) - l_D(\vv) - l_{div}(\vv) - l_I (\vv),
$$
for all $\vv\in \vV_h$, where the linear functionals $l_S(\cdot)$,
$l_D(\cdot)$, $l_{div}(\cdot)$, and $l_I(\cdot)$ are defined by
$$
\begin{aligned}
l_S(\vv) &= 2\nu \sum_{K\in\T_h^S}  <\vv_0-\vv_b, (D(\vu)-\Pi_h D(\vu))\vn>_{\partial K}, \\
l_D(\vv) &= (\bbK^{-1} (\vu-Q_0\vu), \vv_0)_{\Omega_D}, \\[2mm]
l_{div}(\vv) &= \sum_{K\in\T_h}  <(\vv_0 - \vv_b)\cdot\vn, p-\bbQ_h p>_{\partial K} , \\
l_I (\vv) &= <\mu \bbK^{-1/2} (\vu_S-Q_b\vu_s) \cdot \hvt, \vv_b\cdot\hvt>_{\Gamma_{SD}}.
\end{aligned}
$$
\end{lemma}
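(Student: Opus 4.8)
The plan is to prove the identity directly, by inserting the exact solution into the discrete forms and reducing everything back to the continuous equations (\ref{eq:eq11}) and the interface condition (\ref{eq:interfaceCondition}). First I would move the stabilization to the right-hand side, so that it suffices to establish
$$2\nu(D_w(Q_h\vu), D_w(\vv))_{\Omega_S} + (\bbK^{-1}Q_0\vu, \vv_0)_{\Omega_D} + a_I(Q_h\vu, \vv) + b_h(\vv, \bbQ_h p) = (\vf, \vv_0) + l_S(\vv) - l_D(\vv) - l_{div}(\vv) - l_I(\vv).$$
I would treat the Stokes bulk term, the Darcy bulk term, and the pressure/$b_h$ contribution one at a time, retaining all element-boundary terms, and then show that after summing over edges only the interface survives, where the interface condition closes the argument.

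For the Stokes term, I would use Lemma \ref{lem:projections} to replace $D_w(Q_h\vu)$ by $\Pi_h D(\vu)$, then apply the weak-gradient definition (\ref{eq:weakgrad}) against the symmetric tensor $\Pi_h D(\vu)$ and integrate by parts inside each $K\in\T_h^S$. Since $\nabla\vv_0\in[P_\beta(K)]^{2\times2}$ (using $\alpha_S\le\beta+1$), the projection $\Pi_h$ drops against $\nabla\vv_0$, leaving $2\nu(D(\vu), D(\vv_0))_K$, the boundary error $2\nu<\vv_0-\vv_b, (D(\vu)-\Pi_h D(\vu))\vn>_{\partial K}$ that assembles into $l_S(\vv)$, and a term $-2\nu<\vv_0-\vv_b, D(\vu)\vn>_{\partial K}$. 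Testing the Stokes momentum equation $-2\nu\nabla\cdot D(\vu)+\nabla p=\vf$ against $\vv_0$ rewrites $2\nu(D(\vu),D(\vv_0))_K$ as $(\vf,\vv_0)_K - (\nabla p,\vv_0)_K + 2\nu<D(\vu)\vn,\vv_0>_{\partial K}$, after which the $\vv_0$ boundary contributions cancel and only $2\nu<\vv_b, D(\vu)\vn>_{\partial K}$ remains. For the Darcy term, substituting $\bbK^{-1}\vu=\vf-\nabla p$ and adding and subtracting $Q_0\vu$ immediately produces $(\vf,\vv_0)_{\Omega_D} - (\nabla p,\vv_0)_{\Omega_D} - l_D(\vv)$.

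The two bulk pieces share a term $-\sum_{K}(\nabla p,\vv_0)_K$, which I would combine with $b_h(\vv,\bbQ_h p)=-(\nabla_w\cdot\vv,\bbQ_h p)$. Expanding $b_h$ via (\ref{eq:weakdiv}) and integrating $(\nabla p,\vv_0)_K$ by parts creates an interior term $(\nabla\cdot\vv_0, p-\bbQ_h p)_K$ that vanishes: since $\alpha_S\le\gamma_S+1$ and $\alpha_D=\beta$ with $\gamma_D\ge\beta-1$, one has $\nabla\cdot\vv_0\in P_{\gamma_S}(K)$ or $P_{\gamma_D}(K)$, which is $L^2$-orthogonal to $p-\bbQ_h p$. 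The surviving boundary terms reorganize into $-l_{div}(\vv)-\sum_{K}<\vv_b\cdot\vn, p>_{\partial K}$, so the whole proof collapses to the single identity
$$2\nu\sum_{K\in\T_h^S}<\vv_b, D(\vu)\vn>_{\partial K} - \sum_{K\in\T_h}<\vv_b\cdot\vn, p>_{\partial K} + a_I(Q_h\vu,\vv) + l_I(\vv) = 0.$$

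The closing step is the edge reduction together with the interface condition, which I expect to be the main obstacle, purely because of the sign- and orientation-bookkeeping. Since $\vv_b$ is single-valued on interior edges while the two adjacent outward normals are opposite, and the exact $p$ and $D(\vu)$ have single-valued traces on interior edges of each subdomain, all interior-edge contributions cancel; on $\partial\Omega$ the condition $\vv_b=0$ removes the rest. Only $\Gamma_{SD}$ survives, where, with $\hvn$ pointing from $\Omega_S$ into $\Omega_D$, the sums collapse to $2\nu<D(\vu)\hvn,\vv_b>_{\Gamma_{SD}} - <\vv_b\cdot\hvn, p_S-p_D>_{\Gamma_{SD}}$. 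Since $a_I(Q_h\vu,\vv)+l_I(\vv)=<\mu\bbK^{-1/2}\vu_S\cdot\hvt,\vv_b\cdot\hvt>_{\Gamma_{SD}}$, the left-hand side is exactly the interface condition (\ref{eq:interfaceCondition}), namely $2\nu D(\vu_S)\hvn - (p_S-p_D)\hvn + \mu\bbK^{-1/2}(\vu_S\cdot\hvt)\hvt=0$, tested against $\vv_b$ and integrated over $\Gamma_{SD}$; it therefore vanishes, and the desired identity follows.
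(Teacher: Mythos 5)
Your proposal is correct and follows essentially the same route as the paper: both rest on Lemma \ref{lem:projections}, element-wise integration by parts of the weak gradient and weak divergence, the orthogonality relations coming from (\ref{eq:parameters}), and the cancellation of interior-edge terms via the single-valuedness of $\vv_b$ together with the interface condition (\ref{eq:interfaceCondition}) on $\Gamma_{SD}$. The only difference is organizational --- the paper first derives the consistency identity for $(\vf,\vv_0)$ and then expands $a_h(Q_h\vu,\vv)+b_h(\vv,\bbQ_h p)$ separately before matching, whereas you substitute the PDE into the expanded discrete forms in one pass --- which does not change the substance of the argument.
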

{\bf Proof} Testing problem (\ref{eq:eq11}) with $\vv = \{\vv_0,
\vv_b\}\in \vV_h$ and using integration by parts, one gets
\begin{equation} \label{eq:erroreq-1}
\begin{aligned}
&\quad\, (\vf, \vv_0) \\&= (-\nabla\cdot(2\nu D(\vu)-pI), \vv_0)_{\Omega_S} + (\bbK^{-1} \vu, \vv_0)_{\Omega_D} + (\nabla p, \vv_0)_{\Omega_D}  \\
&= \sum_{K\in\T_h^S} \bigg( 2\nu(D(\vu), D(\vv_0))_K - 2\nu <\vv_0, D(\vu)\vn>_{\partial K} \bigg)
         + (\bbK^{-1} \vu, \vv_0)_{\Omega_D}\\
&\qquad +  \sum_{K\in\T_h} \bigg( -(\nabla\cdot\vv_0, p)_K + <\vv_0\cdot\vn, p>_{\partial K}\bigg) \\
&= \sum_{K\in\T_h^S} \bigg( 2\nu(D(\vu), D(\vv_0))_K - 2\nu <\vv_0 - \vv_b, D(\vu)\vn>_{\partial K} \bigg)
         + (\bbK^{-1} \vu, \vv_0)_{\Omega_D}\\
&\qquad +  \sum_{K\in\T_h} \bigg( -(\nabla\cdot\vv_0, p)_K + <(\vv_0-\vv_b)\cdot\vn, p>_{\partial K}\bigg)\\
&\qquad + <\mu \bbK^{-1/2} \vu_S\cdot \hvt, \vv_b\cdot\hvt>_{\Gamma_{SD}},
\end{aligned}
\end{equation}
where in the last step we have used $\vv_b = \vzero$ on $\partial \Omega$, the interface condition (\ref{eq:interfaceCondition}),
and the continuity of $\bbT(\vu,p)\vn$ and $p$ across the edges in $\E_{0,h}^S$ and $\E_{0,h}^D$, respectively.
More specifically, that is
$$
\begin{aligned}
&\sum_{K\in\T_h^S} 2\nu <\vv_b,  D(\vu)\vn>_{\partial K} - \sum_{K\in\T_h} <\vv_b\cdot\vn, p>_{\partial K} \\
=& \sum_{K\in\T_h^S} <\vv_b, \bbT(\vu, p)\vn>_{\partial K} - \sum_{K\in\T_h^D} <\vv_b, p\vn>_{\partial K} \\
=& \sum_{e\in \E_{0,h}^S} <\vv_b, [\bbT(\vu, p)\vn]>_e + \sum_{e\in \E_{h}^{SD}} <\vv_b, \bbT(\vu, p)\hvn>_e \\
&\quad - \sum_{e\in \E_{0,h}^D} <\vv_b, [p\vn]>_e - \sum_{e\in \E_{h}^{SD}} <\vv_b, -p\hvn>_e \\
= & \sum_{e\in \E_{h}^{SD}} <\vv_b, \bbT(\vu, p)\hvn>_e + \sum_{e\in \E_{h}^{SD}} <\vv_b, p\hvn>_e \\
= & -\sum_{e\in \E_{h}^{SD}} <\vv_b, \,\mu \bbK^{-1/2} \,
(\vu_S\cdot \hvt)\hvt>_e,
\end{aligned}
$$
where $[\cdot]$ denotes the jump, which is a notation borrowed from
the discontinuous Galerkin literature.

Now let us compute $a_h(Q_h \vu, \vv) + b_h(\vv, \bbQ_h p)$, where
$\vu$, $p$ are the solutions to (\ref{eq:eq11}) and $\vv\in \vV_h$.
Since $D_w(Q_h\vu) = \Pi_h D(\vu)$ is symmetric and by using the
properties of $Q_h$, we have
\begin{equation} \label{eq:erroreq2}
\begin{aligned}
&\quad\, a_h(Q_h \vu, \vv) + b_h(\vv, \bbQ_h p) \\&= 2\nu
(D_w(Q_h\vu), D_w(\vv))_{\Omega_S} + (\bbK^{-1} Q_0\vu,
\vv_0)_{\Omega_D}
    + s(Q_h\vu, \vv) \\
    &\qquad + <\mu\bbK^{-1/2} (Q_b\vu_S)\cdot\hvt, \vv_b\cdot\hvt>_{\Gamma_{SD}} - (\nabla_w\cdot\vv, \bbQ_h p) \\
    &= 2\nu (\Pi_h D(\vu), \nabla_w(\vv))_{\Omega_S} + (\bbK^{-1} Q_0 \vu, \vv_0)_{\Omega_D}
    + s(Q_h\vu, \vv) \\
    &\qquad + <\mu\bbK^{-1/2} (Q_b\vu_S)\cdot\hvt, \vv_b\cdot\hvt>_{\Gamma_{SD}} - (\nabla_w\cdot\vv, \bbQ_h p).
\end{aligned}
\end{equation}
Note that by condition (\ref{eq:parameters}) and the properties of
$L^2$ projections,
\begin{equation} \label{eq:erroreq3}
\begin{aligned}
&\, 2\nu (\Pi_h D(\vu), \nabla_w(\vv))_{\Omega_S} \\
=&\, 2\nu \sum_{K\in\T_h^S} \bigg(
   -(\vv_0, \nabla\cdot(\Pi_h D(\vu)))_K  + <\vv_b, \Pi_h D(\vu)\vn>_{\partial K} \bigg) \\
   =&\, 2\nu\sum_{K\in\T_h^S} \bigg( (\nabla\vv_0, \Pi_h D(\vu))_K  - <\vv_0-\vv_b, \Pi_h D(\vu)\vn>_{\partial K} \bigg) \\
   =&\, 2\nu\sum_{K\in\T_h^S} \bigg( (D(\vu), D(\vv_0))_K  - <\vv_0-\vv_b, \Pi_h D(\vu)\vn>_{\partial K} \bigg),
\end{aligned}
\end{equation}
and
\begin{equation} \label{eq:erroreq4}
\begin{aligned}
-(\nabla_w\cdot\vv, \bbQ_h p) &= \sum_{K\in\T_h} \bigg(
      (\vv_0, \nabla(\bbQ_h p))_K - <\vv_b\cdot\vn, \bbQ_h p>_{\partial K}  \bigg) \\
      &= \sum_{K\in\T_h} \bigg( -(\nabla\cdot\vv_0, \bbQ_h p)_K + <(\vv_0 - \vv_b)\cdot\vn, \bbQ_h p>_{\partial K}  \bigg) \\
      &= \sum_{K\in\T_h} \bigg( -(\nabla\cdot\vv_0, p)_K + <(\vv_0 - \vv_b)\cdot\vn, \bbQ_h p>_{\partial K}  \bigg).
\end{aligned}
\end{equation}
Substituting (\ref{eq:erroreq3}) and (\ref{eq:erroreq4}) into (\ref{eq:erroreq2}), and then using (\ref{eq:erroreq-1}),
this completes the proof of the lemma.
$\Box$
\medskip

Finally, we have
\begin{lemma} \label{lem:eq2}
The solution $\vu$ to problem (\ref{eq:eq11}) satisfies
$$
b_h(Q_h \vu, q) = -(g, q),
$$
for all $q\in \Psi_h$.
\end{lemma}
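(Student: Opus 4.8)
The plan is to reduce the weak divergence to the ordinary divergence via Lemma~\ref{lem:projections}, and then invoke the continuity equation directly. First I would unfold the definition $b_h(Q_h\vu, q) = -(\nabla_w\cdot(Q_h\vu), q)$ and apply the second identity of Lemma~\ref{lem:projections}. This requires $\vu \in H(\div,\Omega)$, which is guaranteed since $\vu$ solves (\ref{eq:eq11}) and in particular belongs to $\vV \subset H_0(\div,\Omega)$. We therefore obtain $\nabla_w\cdot(Q_h\vu) = \pi_h(\nabla\cdot\vu)$ on each element, and hence $b_h(Q_h\vu,q) = -(\pi_h(\nabla\cdot\vu), q)$.

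Next I would remove the projection $\pi_h$. Working element by element, the key observation is that $q|_K$ is a polynomial of degree $\gamma_S$ on Stokes elements and degree $\gamma_D$ on Darcy elements, and by condition (\ref{eq:parameters}) both $\gamma_S \le \beta$ and $\gamma_D \le \beta$. Thus $q|_K \in P_\beta(K)$ lies in the range of the local $L^2$ projection $\pi_h$ onto $P_\beta(K)$, so that $\pi_h q = q$ on each $K$. Using the self-adjointness of $\pi_h$ in the $L^2$ inner product then gives $(\pi_h(\nabla\cdot\vu), q)_K = (\nabla\cdot\vu, \pi_h q)_K = (\nabla\cdot\vu, q)_K$; summing over all $K \in \T_h$ yields $b_h(Q_h\vu, q) = -(\nabla\cdot\vu, q)$.

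Finally, the third equation of (\ref{eq:eq11}) states $\nabla\cdot\vu = g$ throughout $\Omega$, so $-(\nabla\cdot\vu, q) = -(g, q)$, which is precisely the claimed identity. I do not anticipate any genuine obstacle here: the entire argument is essentially a one-line consequence of Lemma~\ref{lem:projections} combined with the degree budget in (\ref{eq:parameters}). The only point demanding a little care is confirming that $q$ is \emph{not} annihilated by the projection---that is, that $\gamma_S, \gamma_D \le \beta$ so $\pi_h q = q$---which is exactly why that inequality was built into (\ref{eq:parameters}). This is the same mechanism already exploited in the proof of Lemma~\ref{lem:inf-sup}, where the facts $\gamma_S \le \beta$ and $\gamma_D \le \beta$ were used to pass from $(\pi_h(\nabla\cdot\vw), q)$ to $(\nabla\cdot\vw, q)$.
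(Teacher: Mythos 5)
Your proposal is correct and follows essentially the same route as the paper's own proof: unfold $b_h$, apply the second identity of Lemma \ref{lem:projections}, drop the projection $\pi_h$ using $\gamma_S,\gamma_D\le\beta$ from (\ref{eq:parameters}), and invoke $\nabla\cdot\vu=g$. Your version merely spells out the intermediate justifications (self-adjointness of $\pi_h$, membership $q|_K\in P_\beta(K)$) that the paper leaves implicit.
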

{\bf Proof} By Lemma \ref{lem:projections} and inequality (\ref{eq:parameters}), we have
$$
b_h(Q_h \vu, q) = -(\nabla_w\cdot(Q_h\vu), q) = -(\pi_h(\nabla\cdot\vu), q) = -(\nabla\cdot\vu, q) = -(g, q).
$$
This completes the proof of the lemma.
$\Box$

\section{Error analysis} \label{sec:error}
In this section we derive an error estimation of the weak Galerkin
approximation (\ref{eq:weakGalerkinformulation}). Let $\vu$, $p$ be
the solution to problem (\ref{eq:eq11}), and $\vu_h = \{\vu_0,
\vu_b\}$, $p_h$ be the solution to the weak Galerkin formulation
(\ref{eq:weakGalerkinformulation}). Define the error by
$$
\eu = Q_h \vu - \vu_h = \{Q_0\vu-\vu_0, Q_b\vu-\vu_b\},\qquad \ep = \bbQ p - p_h.
$$
Then by Equation (\ref{eq:weakGalerkinformulation}), Lemma \ref{lem:eq1} and \ref{lem:eq2}, we clearly have
\begin{equation} \label{eq:erroreq}
\begin{aligned}
a_h(\eu, \vv) & + b_h(\vv, \ep) &&\\
   &= s(Q_h\vu, \vv) + l_S(\vv)-l_D(\vv) - l_{div}(\vv) - l_I (\vv) \qquad &&\textrm{for all }\vv\in \vV_h, \\
b_h(\eu, q) &= 0 \qquad && \textrm{for all } q\in \Psi_h.
\end{aligned}
\end{equation}

We shall first derive an upper bound for the right-hand side of
(\ref{eq:erroreq}). To this end, we start from getting an upper
bound of $\left( \sum_{K\in\T_h^S} h_K^{-1}
\|\vv_0-\vv_b\|_{\partial K}^2  \right)^{1/2}$, for any $\vv\in
\vV_h$. From the definition of $Q_b$ and $\|\cdot\|_{\vV_h}$, it is
clear that

\begin{lemma} \label{lem:functional1}
If $\alpha_S = \beta$, then for all $\vv\in \vV_h$ we have
$$
\left( \sum_{K\in\T_h^S} h_K^{-1} \|\vv_0-\vv_b\|_{\partial K}^2
\right)^{1/2} \lesssim \|\vv\|_{\vV_h}.
$$
\end{lemma}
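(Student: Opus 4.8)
The plan is to exploit the hypothesis $\alpha_S = \beta$, which forces the edge projection $Q_b$ to act as the identity on the traces of $\vv_0$ along Stokes-side edges. Indeed, for $K \in \T_h^S$ the interior component satisfies $\vv_0|_{K_0} \in [P_{\alpha_S}(K_0)]^2 = [P_{\beta}(K_0)]^2$. On any (straight) edge $e$ of $K$, the trace of a bivariate polynomial of degree at most $\beta$ is a univariate polynomial of degree at most $\beta$, so $\vv_0|_e \in [P_{\beta}(e)]^2$. Since on the Stokes side, including the interface, $Q_b$ is precisely the $L^2$ projection onto $[P_{\beta}(e)]^2$, we obtain $Q_b\vv_0 = \vv_0$ on every edge of every $K \in \T_h^S$.

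With this identity in hand, the estimate is immediate. First I would record, for each $K \in \T_h^S$, the edge-by-edge equality
$$
\|\vv_0 - \vv_b\|_{\partial K} = \|Q_b\vv_0 - \vv_b\|_{\partial K},
$$
and then sum the weighted squares over $K \in \T_h^S$ to get
$$
\sum_{K\in\T_h^S} h_K^{-1}\|\vv_0 - \vv_b\|_{\partial K}^2
 = \sum_{K\in\T_h^S} h_K^{-1}\|Q_b\vv_0 - \vv_b\|_{\partial K}^2.
$$
The right-hand side equals $\rho_S^{-1}$ times one of the nonnegative terms in the definition of $\|\vv\|_{\vV_h}^2$, hence is bounded by $\rho_S^{-1}\|\vv\|_{\vV_h}^2 \lesssim \|\vv\|_{\vV_h}^2$. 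Taking square roots completes the argument.

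There is essentially no hard step: the whole content is the recognition that $\alpha_S = \beta$ renders $Q_b$ invisible on the traces of $\vv_0$, which is exactly why this hypothesis is singled out. The only point I would pause on is the absorption of $\rho_S^{-1}$ into ``$\lesssim$''; this is legitimate because $\rho_S$ is a fixed positive stabilization constant (one may take $\rho_S = 1$) and so counts as a problem parameter rather than a mesh- or function-dependent quantity. I would also note that this argument fails when $\alpha_S = \beta+1$, since then $Q_b\vv_0 \neq \vv_0$ in general and one would need an additional approximation estimate controlling $\vv_0 - Q_b\vv_0$ on $\partial K$ by the remaining norm terms.
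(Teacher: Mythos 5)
Your proof is correct and fills in exactly the reasoning the paper leaves implicit: the paper simply asserts the lemma is ``clear from the definition of $Q_b$ and $\|\cdot\|_{\vV_h}$,'' and the content of that claim is precisely your observation that $\alpha_S=\beta$ makes $Q_b\vv_0=\vv_0$ on Stokes-side edges, so the left-hand side coincides with the stabilization term already present in $\|\vv\|_{\vV_h}$. Same approach, just written out in full.
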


We would like to derive the same bound for $\alpha_S = \beta+1$,
which turns out to be much more complicated and requires a discrete
Korn's inequality. The proof of the following lemma is too long and
hence is postponed to Appendix \ref{sec:appendix}.

\begin{lemma} \label{lem:functional2}
When $\beta\ge 1$, $\alpha_S = \beta+1$ and assume that all $K\in
\mathcal{T}_h^S$ are affine homeomorphic to a fixed finite set of
reference polygons, then for all $\vv\in \vV_h$ we have
$$
\left( \sum_{K\in\T_h^S} h_K^{-1} \|\vv_0-\vv_b\|_{\partial K}^2
\right)^{1/2} \lesssim \|\vv\|_{\vV_h}.
$$
\end{lemma}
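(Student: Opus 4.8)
The plan is to reduce the estimate to a local discrete Korn inequality. By the triangle inequality, $\|\vv_0-\vv_b\|_{\partial K}\le \|\vv_0-Q_b\vv_0\|_{\partial K}+\|Q_b\vv_0-\vv_b\|_{\partial K}$ on each $K\in\T_h^S$; the second term, after multiplication by $h_K^{-1}$ and summation over $\T_h^S$, is exactly the Stokes-side stabilization part of $\|\vv\|_{\vV_h}^2$. Hence it suffices to bound $\sum_{K\in\T_h^S}h_K^{-1}\|\vv_0-Q_b\vv_0\|_{\partial K}^2$ by $\|\vv\|_{\vV_h}^2$. When $\alpha_S=\beta$ this term vanishes identically, which is why Lemma \ref{lem:functional1} is immediate; the entire difficulty in the case $\alpha_S=\beta+1$ is that the trace of $\vv_0$ now carries a degree-$(\beta+1)$ component that $Q_b$ discards.

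First I would control the elementwise strain $\|D(\vv_0)\|_K$ by the norm. Repeating the computation in the proof of Lemma \ref{lem:vV-h-norm-1}, for every symmetric $\tau\in[P_\beta(K)]^{2\times2}$ one has $(D_w(\vv),\tau)_K=(D(\vv_0),\tau)_K-<Q_b\vv_0-\vv_b,\tau\vn>_{\partial K}$. Since $\vv_0\in[P_{\beta+1}(K)]^2$ gives $D(\vv_0)\in[P_\beta(K)]^{2\times2}$, I may take $\tau=D(\vv_0)$, obtaining $\|D(\vv_0)\|_K^2=(D_w(\vv),D(\vv_0))_K+<Q_b\vv_0-\vv_b,D(\vv_0)\vn>_{\partial K}$. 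Applying Cauchy--Schwarz together with the trace inequality (\ref{eq:boundary2interior}) and the inverse inequality to the boundary term, then dividing by $\|D(\vv_0)\|_K$, yields $\|D(\vv_0)\|_K\lesssim\|D_w(\vv)\|_K+h_K^{-1/2}\|Q_b\vv_0-\vv_b\|_{\partial K}$. Squaring and summing over $\T_h^S$ gives $\sum_{K}\|D(\vv_0)\|_K^2\lesssim\|\vv\|_{\vV_h}^2$.

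The second, and decisive, step is a local Korn inequality on polynomials. On each $K$ I would select a rigid body motion $r\in RM$ (the best $H^1$-fit of $\vv_0$ in $RM$) so that $\|\vv_0-r\|_K\lesssim h_K\|D(\vv_0)\|_K$ and $\|\nabla(\vv_0-r)\|_K\lesssim\|D(\vv_0)\|_K$. Because $\beta\ge1$ we have $RM\subset[P_1(K)]^2\subset[P_\beta(K)]^2$, so the trace of $r$ is a degree-$\beta$ polynomial on every edge and hence $Q_br=r$ on $\partial K$; consequently $\vv_0-Q_b\vv_0=(\vv_0-r)-Q_b(\vv_0-r)$, and since $Q_b$ is an $L^2$-contraction, $\|\vv_0-Q_b\vv_0\|_{\partial K}\le\|\vv_0-r\|_{\partial K}$. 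Bounding the right-hand side by the trace inequality (\ref{eq:boundary2interior}) and inserting the Korn estimates gives $h_K^{-1}\|\vv_0-Q_b\vv_0\|_{\partial K}^2\lesssim h_K^{-2}\|\vv_0-r\|_K^2+\|\nabla(\vv_0-r)\|_K^2\lesssim\|D(\vv_0)\|_K^2$. Summing over $\T_h^S$ and combining with the strain bound from the previous step completes the estimate.

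The main obstacle is making the local Korn inequality uniform over the mesh. On a single fixed reference polygon Korn's second inequality holds with a finite constant, but the symmetric gradient does not transform tidily under the affine map $F:\hat K\to K$, so the constant must be transferred by a scaling argument that tracks the condition number of the Jacobian of $F$. This is precisely why the hypothesis that all $K\in\T_h^S$ are affine homeomorphic to a fixed finite set of reference polygons is imposed: it forces the Korn constants on the finitely many reference shapes to share a common bound and keeps the Jacobian uniformly well conditioned, so that every $\lesssim$ above is mesh independent. Carrying out this transfer carefully, and checking that the degree-$(\beta+1)$ polynomial structure is compatible with the reference-to-physical mapping, is the lengthy technical part, which is why it is relegated to Appendix \ref{sec:appendix}.
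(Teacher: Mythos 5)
Your proposal is correct, and it reaches the conclusion by a genuinely different route from the paper. Both arguments open with the same triangle-inequality split $\|\vv_0-\vv_b\|_{\partial K}\le\|\vv_0-Q_b\vv_0\|_{\partial K}+\|Q_b\vv_0-\vv_b\|_{\partial K}$ and both establish the same elementwise strain bound $\|D(\vv_0)\|_K\lesssim\|D_w(\vv)\|_K+h_K^{-1/2}\|Q_b\vv_0-\vv_b\|_{\partial K}$ (this is exactly Lemma \ref{lem:vv0} in the appendix). The divergence is in how the Korn inequality enters. The paper first enlarges $\|\vv_0-Q_b\vv_0\|_{\partial K}$ to an approximation error controlled by the full broken gradient, and must then invoke Brenner's \emph{global} discrete Korn inequality for piecewise $H^1$ fields, (\ref{eq:discreteKorn}), which brings in inter-element jump terms $\|\pi_e[\vv_0]\|_e$ and a boundary rigid-body-motion functional on $\Gamma_S$; each of these is then absorbed using $\beta\ge 1$ and the vanishing of $\vv_b$ on $\Gamma_S$. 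You instead exploit that $\vv_0-Q_b\vv_0=(I-Q_b)(\vv_0-r)$ for any $r\in RM$ (valid precisely because $\beta\ge1$ gives $Q_br=r$ on edges), so a purely \emph{local}, elementwise Korn second inequality in quotient form, combined with the trace inequality (\ref{eq:boundary2interior}), closes the estimate without ever bounding $\sum_K\|\nabla\vv_0\|_K^2$, without the jump terms, and without using the boundary condition on $\Gamma_S$ or the connectivity of $\T_h^S$. Both routes consume the ``finite set of reference polygons'' hypothesis, yours to make the local Korn constant uniform over shape-regular elements, the paper's to apply Brenner's theorem; your scaling concern about the symmetric gradient under affine maps is the right one and is resolved exactly as you indicate. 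The trade-off is that the paper's longer detour yields the stronger intermediate statement $\sum_{K\in\T_h^S}\|\nabla\vv_0\|_K^2\lesssim\|\vv\|_{\vV_h}^2$, a discrete Korn inequality for the scheme that is of independent interest, whereas your argument is shorter and more self-contained but bypasses that fact.
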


For the case $\beta = 0$ and $\alpha_S = 1$, the discrete Korn's
inequality fails and we do not know if the same result as in lemmas
(\ref{lem:functional1}) and (\ref{lem:functional2}) holds or not.

Using lemmas (\ref{lem:functional1}) and (\ref{lem:functional2}), we
have
\begin{lemma} \label{lem:functionals}
Let $\vu$, $p$ be the solution to problem (\ref{eq:eq11}) and $\vv\in \vV_h$, then we have
$$
\begin{aligned}
s(Q_h\vu, \vv) &\lesssim \bigg(h^{r_S} \|\vu\|_{r_S+1,\Omega_S} + h^{r_D}\|\vu\|_{r_D+1,\Omega_D} \bigg)\|\vv\|_{\vV_h}, \\
l_S(\vv) &\lesssim  h^{r_\beta+1} \|\vu\|_{r_\beta+2,\Omega_S} \|\vv\|_{\vV_h},\\[2mm]
l_D(\vv) &\lesssim h^{r_D+1}\|\vu\|_{r_D+1,\Omega_D} \|\vv\|_{\vV_h},\\
l_{div}(\vv) &\lesssim \bigg( h^{t_S+1} \|p\|_{t_S+1,\Omega_S}
   +  h^{t_D+1} \|p\|_{t_D+1, \Omega_S} \bigg) \|\vv\|_{\vV_h}, \\
l_I (\vv) &\lesssim h^{r_{\beta}+1} \|\vu\|_{r_\beta+1,\Gamma_{SD}} \|\vv\|_{\vV_h}.
\end{aligned}
$$
where $0\le r_S\le \alpha_S$, $0\le r_D\le \alpha_D$, $0\le r_\beta \le \beta$, $0\le t_S\le \gamma_S$ and $0\le t_D\le \gamma_D$.
\end{lemma}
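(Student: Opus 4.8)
The plan is to treat all five functionals by the same two-step recipe: first apply Cauchy--Schwarz (edge-by-edge or element-by-element) to factor each functional into a ``$\vv$-factor'' and a ``data-factor''; then bound the $\vv$-factor by $\|\vv\|_{\vV_h}$ and the data-factor by the appropriate power of $h$ times a Sobolev norm of the exact solution. The $\vv$-factors come in two flavours. Those appearing \emph{verbatim} in the definition of $\|\cdot\|_{\vV_h}$ (the Darcy jump $\sum_{K\in\T_h^D} h_K^{-1}\|(\vv_0-\vv_b)\cdot\vn\|_{\partial K}^2$, the Darcy mass term $\|\bbK^{-1/2}\vv_0\|_{\Omega_D}$, and the interface term $\|\mu^{1/2}\bbK^{-1/4}\vv_b\cdot\hvt\|_{\Gamma_{SD}}$) are bounded by $\|\vv\|_{\vV_h}$ trivially. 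Those involving the \emph{full} Stokes jump $\big(\sum_{K\in\T_h^S} h_K^{-1}\|\vv_0-\vv_b\|_{\partial K}^2\big)^{1/2}$---rather than the projected jump $Q_b\vv_0-\vv_b$ sitting in the norm---are exactly what Lemma \ref{lem:functional1} (for $\alpha_S=\beta$) and Lemma \ref{lem:functional2} (for $\alpha_S=\beta+1$) control. Every data-factor is then dispatched by the trace inequality (\ref{eq:boundary2interior}) followed by the standard approximation estimates for the $L^2$ projections $Q_0$, $\Pi_h$ and $\bbQ_h$.

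Concretely, for $s(Q_h\vu,\vv)$ I would rewrite $Q_b(Q_0\vu)-Q_b\vu=Q_b(Q_0\vu-\vu)$, use that $Q_b$ is an $L^2$-contraction, and apply (\ref{eq:boundary2interior}) termwise to get $h_K^{-1}\|Q_b(Q_0\vu-\vu)\|_{\partial K}^2\lesssim h_K^{-2}\|Q_0\vu-\vu\|_K^2+\|\nabla(Q_0\vu-\vu)\|_K^2\lesssim h_K^{2r_S}\|\vu\|_{r_S+1,K}^2$ on the Stokes side, and the analogous $h_K^{2r_D}\|\vu\|_{r_D+1,K}^2$ on the Darcy side; Cauchy--Schwarz against the two matching norm-terms then yields the stated bound. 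The functional $l_D(\vv)=(\bbK^{-1}(\vu-Q_0\vu),\vv_0)_{\Omega_D}$ is simpler: Cauchy--Schwarz gives $\|\vu-Q_0\vu\|_{\Omega_D}\,\|\bbK^{-1/2}\vv_0\|_{\Omega_D}$ after absorbing the bounded factor $\bbK^{-1}$, and $\|\vu-Q_0\vu\|_{\Omega_D}\lesssim h^{r_D+1}\|\vu\|_{r_D+1,\Omega_D}$ finishes it.

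The two functionals that genuinely require the functional-norm lemmas are $l_S$ and the Stokes part of $l_{div}$. For $l_S(\vv)=2\nu\sum_{K\in\T_h^S}<\vv_0-\vv_b,(D(\vu)-\Pi_h D(\vu))\vn>_{\partial K}$, I would split the pairing as $\big(\sum_K h_K^{-1}\|\vv_0-\vv_b\|_{\partial K}^2\big)^{1/2}\big(\sum_K h_K\|(D(\vu)-\Pi_h D(\vu))\vn\|_{\partial K}^2\big)^{1/2}$; the first factor is $\lesssim\|\vv\|_{\vV_h}$ by Lemma \ref{lem:functional1} or \ref{lem:functional2}, while (\ref{eq:boundary2interior}) plus the $\Pi_h$-approximation estimate bound the second by $h^{r_\beta+1}\|\vu\|_{r_\beta+2,\Omega_S}$. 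For $l_{div}$ I split the sum over $\T_h$ into Stokes and Darcy parts: on the Stokes side the $\vv$-factor $\big(\sum h_K^{-1}\|\vv_0-\vv_b\|_{\partial K}^2\big)^{1/2}$ is again handled by the functional lemmas and the data-factor by the $\bbQ_h$-estimate, giving $h^{t_S+1}\|p\|_{t_S+1,\Omega_S}$; on the Darcy side the $\vv$-factor $\big(\sum h_K^{-1}\|(\vv_0-\vv_b)\cdot\vn\|_{\partial K}^2\big)^{1/2}$ lives in the norm directly, giving $h^{t_D+1}\|p\|_{t_D+1,\Omega_D}$. Finally $l_I(\vv)$ is handled by Cauchy--Schwarz on $\Gamma_{SD}$: its $\vv$-factor is the interface norm-term, and the one-dimensional edge-projection estimate $\|\vu_S-Q_b\vu_S\|_e\lesssim h_e^{r_\beta+1}|\vu|_{r_\beta+1,e}$ (using that $\mu,\bbK$ are bounded) yields $h^{r_\beta+1}\|\vu\|_{r_\beta+1,\Gamma_{SD}}$.

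The only conceptually hard ingredient has already been isolated: bounding the full Stokes jump $\sum h_K^{-1}\|\vv_0-\vv_b\|_{\partial K}^2$ by $\|\vv\|_{\vV_h}^2$ when $\alpha_S=\beta+1$, which rests on the discrete Korn's inequality and is quarantined in Lemma \ref{lem:functional2}. Granting those lemmas, the present proof is entirely routine---only Cauchy--Schwarz, the trace inequality (\ref{eq:boundary2interior}), and polynomial approximation theory are needed---and the admissible ranges $0\le r_S\le\alpha_S$, $0\le r_D\le\alpha_D$, $0\le r_\beta\le\beta$, $0\le t_S\le\gamma_S$, $0\le t_D\le\gamma_D$ are precisely the orders up to which the respective $L^2$ projections approximate.
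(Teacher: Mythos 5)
Your proposal is correct and follows essentially the same route as the paper's proof: Cauchy--Schwarz on each functional, the trace inequality (\ref{eq:boundary2interior}) plus $L^2$-projection approximation for the data factors, the norm terms directly for the Darcy and interface $\vv$-factors, and Lemmas \ref{lem:functional1}--\ref{lem:functional2} for the full Stokes jump in $l_S$ and $l_{div}$. The only cosmetic difference is that for $s(Q_h\vu,\vv)$ the paper moves $Q_b$ onto the test-function factor by self-adjointness rather than invoking the contraction property, which yields the identical bound.
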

{\bf Proof}
By the definition of $s(\cdot, \cdot)$, the property of $Q_b$, the Schwartz inequality, inequality (\ref{eq:boundary2interior}),
and the approximation property of $Q_0$, we have
$$
\begin{aligned}
s(Q_h\vu, \vv) &= \rho_S \sum_{K\in \T_h^S} h_K^{-1} <Q_0\vu-\vu,
Q_b\vv_0-\vv_b>_{\partial K} \\
  &\quad + \rho_D \sum_{K\in\T_h^D} h_K^{-1} <(Q_0\vu-\vu)\cdot\vn, (Q_b\vv_0-\vv_b)\cdot\vn>_{\partial K} \\
 &\lesssim  \rho_S \left(\sum_{K\in \T_h^S} h_K^{-1} \|Q_0\vu-\vu\|_{\partial K}^2 \right)^{1/2}
                   \left(\sum_{K\in \T_h^S} h_K^{-1} \|Q_b\vv_0-\vv_b\|_{\partial K}^2 \right)^{1/2} \\
 &\quad + \rho_D \left(\sum_{K\in \T_h^D} h_K^{-1} \|Q_0\vu-\vu\|_{\partial K}^2 \right)^{1/2}
                 \left(\sum_{K\in \T_h^D} h_K^{-1} \|(Q_b\vv_0-\vv_b)\cdot\vn\|_{\partial K}^2 \right)^{1/2} \\
 &\lesssim \bigg(h^{r_S} \|\vu\|_{r_S+1,\Omega_S} + h^{r_D}\|\vu\|_{r_D+1,\Omega_D} \bigg)\|\vv\|_{\vV_h},
\end{aligned}
$$
where $0\le r_S\le \alpha_S$ and $0\le r_D\le \alpha_D$.

Next, by using the property of $L^2$ projection and lemmas
\ref{lem:functional1}-\ref{lem:functional2}, we have
$$
\begin{aligned}
l_S(\vv) &= 2\nu \sum_{K\in\T_h^S}  <\vv_0-\vv_b, (D(\vu)-\Pi_h D(\vu))\vn>_{\partial K} \\
  &\lesssim\left( \sum_{K\in\T_h^S} h_K^{-1} \|\vv_0-\vv_b\|_{\partial K}^2  \right)^{1/2}
      \left( \sum_{K\in\T_h^S} h_K \|(D(\vu)-\Pi_h D(\vu))\|_{\partial K}^2  \right)^{1/2}, \\
  &\lesssim h^{r_\beta+1} \|\vu\|_{r_\beta+2,\Omega_S} \|\vv\|_{\vV_h},
\end{aligned}
$$
where $0\le r_\beta \le \beta$. The estimate for $l_D(\vv)$ follows
directly from the approximation property of $Q_0$. Similarly
$$
\begin{aligned}
l_{div}(\vv) &= \sum_{K\in\T_h}  <(\vv_0 - \vv_b)\cdot\vn, p-\bbQ_h p>_{\partial K} \\
&\lesssim
   \left( \sum_{K\in\T_h^S} h_K^{-1} \|\vv_0-\vv_b\|_{\partial K}^2  \right)^{1/2} h^{t_S+1} \|p\|_{t_S+1,
   \Omega_S} \\
   &\qquad+ \left( \sum_{K\in\T_h^D} h_K^{-1} \|(\vv_0-\vv_b)\cdot\vn\|_{\partial K}^2  \right)^{1/2} h^{t_D+1} \|p\|_{t_D+1, \Omega_S} \\
   &\lesssim \bigg( h^{t_S+1} \|p\|_{t_S+1, \Omega_S}
   +  h^{t_D+1} \|p\|_{t_D+1, \Omega_S} \bigg) \|\vv\|_{\vV_h},
\end{aligned}
$$
where $0\le t_S\le \gamma_S$ and $0\le t_D\le \gamma_D$.


Finally,
$$
\begin{aligned}
l_I (\vv) &= <\mu \bbK^{-1/2} (\vu_S-Q_b\vu_s) \cdot \hvt, \vv_b\cdot\hvt>_{\Gamma_{SD}} \\
&\lesssim \|\vu_s - Q_b \vu_s\|_{\Gamma_{SD}} \|\vv\|_{\vV_h} \\
&\lesssim h^{r_{\beta}+1} \|\vu\|_{r_\beta+1,\Gamma_{SD}} \|\vv\|_{\vV_h}.
\end{aligned}
$$
This completes the proof of the lemma. $\Box$
\medskip

Now we are able to write the error estimate:
\begin{theorem}
Let $\alpha_s$ and $\beta$ satisfy the conditions in Lemma
\ref{lem:functional1} and \ref{lem:functional2}, then the error
$\eu$ and $\ep$ satisfies
$$
\begin{aligned}
\|\eu\|_{\vV_h}+\|\ep\|&\lesssim h^{r_S} \|\vu\|_{r_S+1,\Omega_S} +
h^{r_D}\|\vu\|_{r_D+1,\Omega_D} \\
&\qquad  + h^{r_\beta+1} (\|\vu\|_{r_\beta+2,\Omega_S}+ \|\vu\|_{r_\beta+1,\Gamma_{SD}}) \\
&\qquad  + h^{t_S+1} \|p\|_{t_S+1, \Omega_S}    +  h^{t_D+1} \|p\|_{t_D+1, \Omega_S},
  \end{aligned}
$$
where $0\le r_S\le \alpha_S$, $0\le r_D\le \alpha_D$, $0\le r_\beta \le \beta$, $0\le t_S\le \gamma_S$ and $0\le t_D\le \gamma_D$.
\end{theorem}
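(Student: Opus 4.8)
The plan is to follow the standard saddle-point error argument, combining the error equations (\ref{eq:erroreq}) with the coercivity identity $a_h(\vv,\vv)=\norm{\vv}_{\vV_h}^2$, the discrete inf-sup condition from Lemma \ref{lem:inf-sup}, and the functional bounds from Lemma \ref{lem:functionals}. For brevity, write $\Lambda$ for the sum of all approximation terms appearing on the right-hand side of the theorem, so that Lemma \ref{lem:functionals} can be recast as
$$
\abs{s(Q_h\vu,\vv)} + \abs{l_S(\vv)} + \abs{l_D(\vv)} + \abs{l_{div}(\vv)} + \abs{l_I(\vv)} \lesssim \Lambda\,\norm{\vv}_{\vV_h}
\qquad \Forall \vv\in\vV_h.
$$

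First I would estimate the velocity error. Testing the first equation of (\ref{eq:erroreq}) with $\vv=\eu$ and noting that $\ep\in\Psi_h$, the second equation of (\ref{eq:erroreq}) forces $b_h(\eu,\ep)=0$. Since $a_h(\eu,\eu)=\norm{\eu}_{\vV_h}^2$, this yields
$$
\norm{\eu}_{\vV_h}^2 = s(Q_h\vu,\eu) + l_S(\eu) - l_D(\eu) - l_{div}(\eu) - l_I(\eu) \lesssim \Lambda\,\norm{\eu}_{\vV_h},
$$
and dividing by $\norm{\eu}_{\vV_h}$ gives $\norm{\eu}_{\vV_h}\lesssim\Lambda$, which is precisely the asserted velocity bound.

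Next I would estimate the pressure error via Lemma \ref{lem:inf-sup}. Since $\ep\in\Psi_h$, the inf-sup condition gives $\norm{\ep}\lesssim\sup_{\vv\in\vV_h}(\nabla_w\cdot\vv,\ep)/\norm{\vv}_{\vV_h}$. Using $(\nabla_w\cdot\vv,\ep)=-b_h(\vv,\ep)$ and rewriting $-b_h(\vv,\ep)$ from the first error equation as $a_h(\eu,\vv) - s(Q_h\vu,\vv) - l_S(\vv) + l_D(\vv) + l_{div}(\vv) + l_I(\vv)$, I would bound $\abs{a_h(\eu,\vv)}\le\norm{\eu}_{\vV_h}\norm{\vv}_{\vV_h}$ by the Cauchy-Schwarz inequality for $a_h$, which is valid because $a_h$ is symmetric and positive semidefinite with $a_h(\vv,\vv)=\norm{\vv}_{\vV_h}^2$, while the remaining functionals are controlled by $\Lambda\,\norm{\vv}_{\vV_h}$ as above. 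Dividing by $\norm{\vv}_{\vV_h}$ and taking the supremum gives $\norm{\ep}\lesssim\norm{\eu}_{\vV_h}+\Lambda\lesssim\Lambda$, where the final step invokes the velocity bound just proved. Adding the two estimates and expanding $\Lambda$ back into its constituent terms yields the theorem.

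Since all of the genuinely difficult analysis has already been done upstream -- the discrete Korn-type estimate behind Lemma \ref{lem:functional2}, the inf-sup construction of Lemma \ref{lem:inf-sup}, and the consistency identity of Lemma \ref{lem:eq1} -- I do not anticipate a serious obstacle in this final step; the only points demanding care are verifying that $b_h(\eu,\ep)$ vanishes (which relies on $\ep\in\Psi_h$ together with the second error equation) and confirming the Cauchy-Schwarz bound for $a_h$ used in the pressure estimate.
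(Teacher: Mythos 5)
Your proposal is correct and follows essentially the same route as the paper: test with $\vv=\eu$, use $b_h(\eu,\ep)=0$ from the second error equation together with $a_h(\eu,\eu)=\|\eu\|_{\vV_h}^2$ and Lemma \ref{lem:functionals} for the velocity, then recover the pressure from the inf-sup condition of Lemma \ref{lem:inf-sup} after bounding $a_h(\eu,\vv)$ by Cauchy--Schwarz. No gaps.
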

{\bf Proof}
By setting $\vv = \eu$ and $q = \ep$ in (\ref{eq:erroreq}) and then subtract these two equations, we have
$$
\|\eu\|_{\vV_h}^2 = a_h(\eu,\eu) = s(Q_h\vu, \eu) + l_S(\eu) -
l_D(\eu) - l_{div}(\eu) - l_I (\eu).
$$
Applying Lemma \ref{lem:functionals}, this completes the proof for
$\|\eu\|_{\vV_h}$.

To estimate $\|\ep\|$, note that from (\ref{eq:erroreq}), we have
$$
\begin{aligned}
&b_h(\vv, \ep) \\
= &s(Q_h\vu, \vv) + l_S(\vv) - l_D(\vv) - l_{div}(\vv) - l_I (\vv) - a_h(\eu, \vv) \\
\lesssim & \bigg(h^{r_S} \|\vu\|_{r_S+1,\Omega_S} + h^{r_D}\|\vu\|_{r_D+1,\Omega_D}
  + h^{r_\beta+1} (\|\vu\|_{r_\beta+2,\Omega_S}+\|\vu\|_{r_\beta+1,\Gamma_{SD}}) \\
  &\quad
  + h^{t_S+1} \|p\|_{t_S+1, \Omega_S}    +  h^{t_D+1} \|p\|_{t_D+1, \Omega_S} + \|\eu\|_{\vV_h} \bigg) \|\vv\|_{\vV_h}.
\end{aligned}
$$
Then by the discrete inf-sup condition stated in Lemma \ref{lem:inf-sup}, this completes the proof for $\|\ep\|$.
$\Box$

\begin{remark}
Using the condition (\ref{eq:parameters}), one can see that,
assuming the solution to (\ref{eq:eq11}) be as smooth as possible,
we expect to have
$$
\|\eu\|_{\vV_h}+\|\ep\| \lesssim h^{\beta} + h^{\gamma_S+1} + h^{\gamma_D+1}.
$$
Therefore, the best choice seems to be setting $\alpha_S=\alpha_D = \beta = j $ and $\gamma_S =\gamma_D = j-1$,
for $j\ge 1$.
\end{remark}
\section{Numerical results} \label{sec:numerical}

In this section we report some numerical results from solving the
following test problem. Let $\Omega_S = (0,\pi)\times(0,1)$,
$\Omega_D = (0,\pi)\times(-1,0)$ and the interface be $\Gamma_{SD} =
\{0 < x < \pi,\, y=0\}$. Set the coefficients to be
$$
\nu = 1,\qquad \bbK = I, \qquad \mu = 1.
$$
Under the given domain and coefficients, the following set of
functions is known \cite{Chen10} to satisfy the coupled Darcy-Stokes
equation (\ref{eq:eq11}) with the interface conditions
(\ref{eq:interface1})-(\ref{eq:interface3}):
$$
\begin{aligned}
\vu_S &= \begin{bmatrix}v'(y)\cos x\\ v(y)\sin x\end{bmatrix}
\quad\textrm{where}\qquad v(y) = \frac{1}{\pi^2} \sin^2 (\pi y) - 2,
\\[2mm]
p_S &= \sin x\,\sin y,\\[2mm]
p_D &= (e^y-e^{-y})\sin x \qquad\textrm{and}\qquad \vu_D = -\nabla
p_D.
\end{aligned}
$$
Use this set of functions, one can compute the force functions $\vf$
and $g$, as well as the boundary conditions $\vu_S|_{\Gamma_S}$ and
$\vu_D\cdot\vn|_{\Gamma_D}$. This gives a complete set of data for
the coupled Darcy-Stokes problem
(\ref{eq:eq11})-(\ref{eq:interface3}), with the exact solution
known. Of course, we need to subtract $p$ by $\int_\Omega p\, dx$ in
order to make it mean value free.

We test the weak Galerkin approximation
(\ref{eq:weakGalerkinformulation}) for this test problem with the
following settings. Both the Stokes and the Darcy side are divided
into $n\times n$ grids, which combined together forms a $(2n)\times
n$ rectangular mesh. The weak Galerkin space is chosen so that
$$
\alpha_S = \alpha_D = \beta = 1,\qquad \gamma_S = \gamma_D = 0.
$$
According to the analysis given in this paper, the discrete system
(\ref{eq:weakGalerkinformulation}) is well-posed and we expect it to
provide an approximation error of at least
$$
\|\eu\|_{\vV_h}+\|\ep\| = O(h).
$$

The error terms in the theoretical analysis are defined as $\eu =
Q_h \vu - \vu_h$ and $\ep = \bbQ p - p_h$. In practice, for
simplicity, we made some modification. Define by $I_h \vu$ an
interpolation of $\vu$ in $\vV_h$ such that: its value on an edge is
the usual $P_1$ nodal value interpolation using two end points of
the edge; and its value on a rectangle is the $P_1$ nodal value
interpolation using three vertices of the rectangle, the lower-left
corner, the lower-right corner, and the upper-left corner. Define by
$J_h p$ an interpolation of $p$ in $\Psi_h$ such that its value in
each rectangle is a constant equal to the value of $p$ at the center
of this rectangle. Then, we consider the following modified error
terms
$$
\heu = I_h\vu-\vu_h,\qquad \hep = J_h p - p_h.
$$
By the approximation property of projections and interpolations, we
expect that $\|\heu\|_{\vV_h}+\|\hep\|$ has the same order as
$\|\eu\|_{\vV_h}+\|\ep\|$.

We computed the following norms and seminorms of the error: on the
Stokes side are $\|\nabla_w \heu\|_{\Omega_S}$,
$\|(\heu)_0\|_{\Omega_S}$, and $\|\hep\|_{\Omega_S}$; on the Darcy
side are $\|(\heu)_0\|_{\Omega_D}$ and $\|\hep\|_{\Omega_D}$.
According to the theoretical error estimate, we expect at least
$$
\begin{aligned}
\|\nabla_w \heu\|_{\Omega_S} &= O(h),\qquad &\|\hep\|_{\Omega_S}
&=O(h), \\
\|(\heu)_0\|_{\Omega_D} &= O(h),\qquad &\|\hep\|_{\Omega_D}&= O(h).
\end{aligned}
$$
We did not have theoretical error estimate for
$\|(\heu)_0\|_{\Omega_S}$, although by experience from using the
duality argument, we expect that the optimal case for this term is
$$
\|(\heu)_0\|_{\Omega_S} = O(h^2).
$$

In the numerical experiment, we picked the stabilization constants
$\rho_S = \rho_D = \rho$ to be $0.01$, $1$ and $100$. Numerical
results are reported in the tables
\ref{table:rho001}-\ref{table:rho100} and Figure \ref{fig:rho}.

\begin{table}
\begin{center}
\caption{Error for $\rho= 0.01$. Order $O(h^r)$ computed from $n=16$
to $n=128$.} \label{table:rho001}
\begin{tabular}{|c|c|c|c|c|c|}
\hline $n$ & $\|\nabla_w \heu\|_{\Omega_S}$ &
$\|(\heu)_0\|_{\Omega_S}$ & $\|\hep\|_{\Omega_S}$ &
$\|(\heu)_0\|_{\Omega_D}$ & $\|\hep\|_{\Omega_D}$ \\ \hline 8 &
0.76224 &  2.26639 & 0.84301 & 2.54274 & 0.91539 \\ \hline 16 &
0.30306 &  0.26226 & 0.25407 & 1.56049 & 0.56486 \\ \hline 32 &
0.14960 &  0.03332 & 0.08316 & 0.65226 & 0.24125 \\ \hline 64 &
0.07461 &  0.00486 & 0.02365 & 0.20346 & 0.07536 \\ \hline 128 &
0.03719 &  0.00089 & 0.00615 & 0.05691 & 0.02016 \\ \hline $O(h^r)$,
$r=$ & 1.0083 & 2.7383  &  1.7917  &  1.6012  &  1.6103 \\ \hline
\end{tabular}
\end{center}
\end{table}

\begin{table}
\begin{center}
\caption{Error for $\rho = 1$.  Order $O(h^r)$ computed from $n=8$
to $n=128$.} \label{table:rho1}
\begin{tabular}{|c|c|c|c|c|c|}
\hline $n$ & $\|\nabla_w \heu\|_{\Omega_S}$ &
$\|(\heu)_0\|_{\Omega_S}$ & $\|\hep\|_{\Omega_S}$ &
$\|(\heu)_0\|_{\Omega_D}$ & $\|\hep\|_{\Omega_D}$ \\ \hline 8 &
0.56159 &  0.03842 & 0.07539 & 0.18953 & 0.07511 \\
\hline 16 & 0.28729 & 0.00850 & 0.02055 & 0.06858 & 0.01953  \\
\hline 32 & 0.14443 & 0.00204 & 0.00538 & 0.02925 & 0.00492 \\
\hline 64 & 0.07231 & 0.00050 & 0.00137 & 0.01381 & 0.00123 \\
\hline 128& 0.03616 & 0.00012 & 0.00035 & 0.00678 & 0.00031 \\
\hline $O(h^r)$, $r=$ &  0.9904 &   2.0622  &  1.9402 &   1.1924 &
1.9846 \\ \hline
\end{tabular}
\end{center}
\end{table}

\begin{table}
\begin{center}
\caption{Error for $\rho = 100$.  Order $O(h^r)$ computed from $n=8$
to $n=128$.} \label{table:rho100}
\begin{tabular}{|c|c|c|c|c|c|}
\hline $n$ & $\|\nabla_w \heu\|_{\Omega_S}$ &
$\|(\heu)_0\|_{\Omega_S}$ & $\|\hep\|_{\Omega_S}$ &
$\|(\heu)_0\|_{\Omega_D}$ & $\|\hep\|_{\Omega_D}$ \\ \hline 8 &
0.47789 & 0.03379 & 0.31537 & 0.06226 & 0.16416 \\ \hline 16 &
0.24268 & 0.01117 & 0.09409 & 0.02190 & 0.04211 \\ \hline 32 &
0.12079 & 0.00325 & 0.02371 & 0.00950 & 0.01061 \\ \hline 64 &
0.06017 & 0.00086 & 0.00583 & 0.00457 & 0.00264 \\ \hline 128 &
0.03004 & 0.00022 & 0.00144 & 0.00227 & 0.00066 \\ \hline $O(h^r)$,
$r=$ & 0.9995  &  1.8266  &  1.9552  &  1.1817  &  1.9921 \\ \hline
\end{tabular}
\end{center}
\end{table}

In Figure \ref{fig:rho}, we conveniently denote the norms
$\|\nabla_w \heu\|_{\Omega_S}$, $\|(\heu)_0\|_{\Omega_S}$,
$\|\hep\|_{\Omega_S}$, $\|(\heu)_0\|_{\Omega_D}$,
$\|\hep\|_{\Omega_D}$  by H1 $\vu_S$, L2 $\vu_S$, L2 $p_S$, L2
$\vu_D$, and L2 $p_D$. For $\rho=1$ and $\rho=100$, the asymptotic
behavior of the errors are clearly seen from Figure \ref{fig:rho},
as the curves are almost straight lines. For $\rho=0.01$, it seems
that the asymptotic behavior deteriorates when $h$ is large. But as
$h$ becomes smaller, the convergence rates get better. However, we
notice that for all three values of $\rho$, the orders of
$\|(\heu)_0\|_{\Omega_S}$ are approximately equal to or higher than
$O(h^2)$, while the order of other errors are approximately equal to
or higher than $O(h)$, which are guaranteed by the theoretical
analysis. One important and interesting question is how to pick the
best parameter $\rho$. One may use the techniques proposed in
\cite{LazarovLu}. It is beyond the scope of this paper, but suitable
for a future research topic.

From the results of $\rho=1$ and $\rho=100$, it seems that the both
$\|\hep\|_{\Omega_S}$ and $\|\hep\|_{\Omega_D}$ also achieves
$O(h^2)$ convergence. This might be caused by super-convergence on
uniform rectangular meshes. We will not explore the
super-convergence effect here. The super-convergence of a primal
based formulation for the Darcy-Stokes equation has been discussed
in \cite{Chen10}.

\begin{figure}
\begin{center}
\includegraphics[width=6cm]{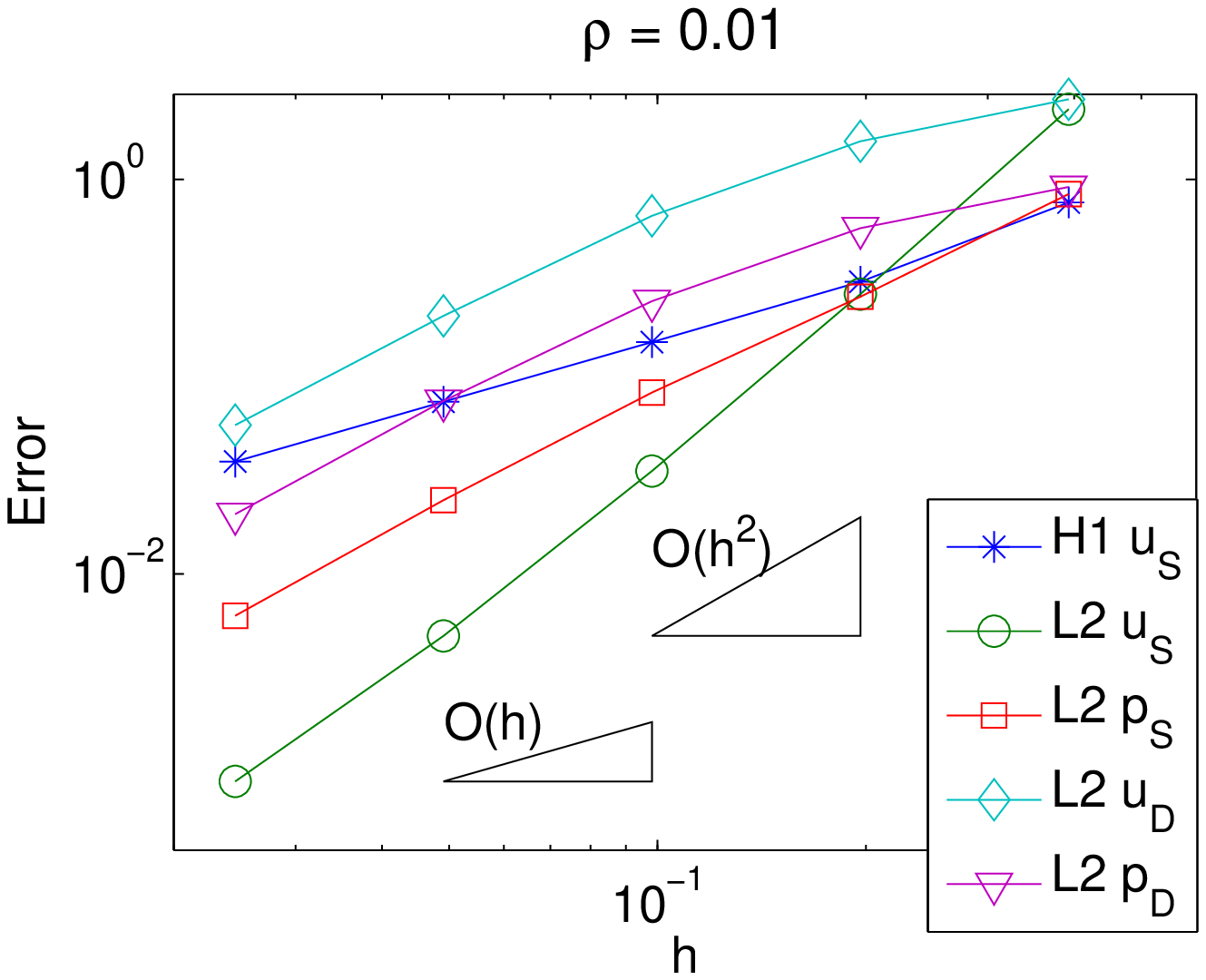}\;\includegraphics[width=6cm]{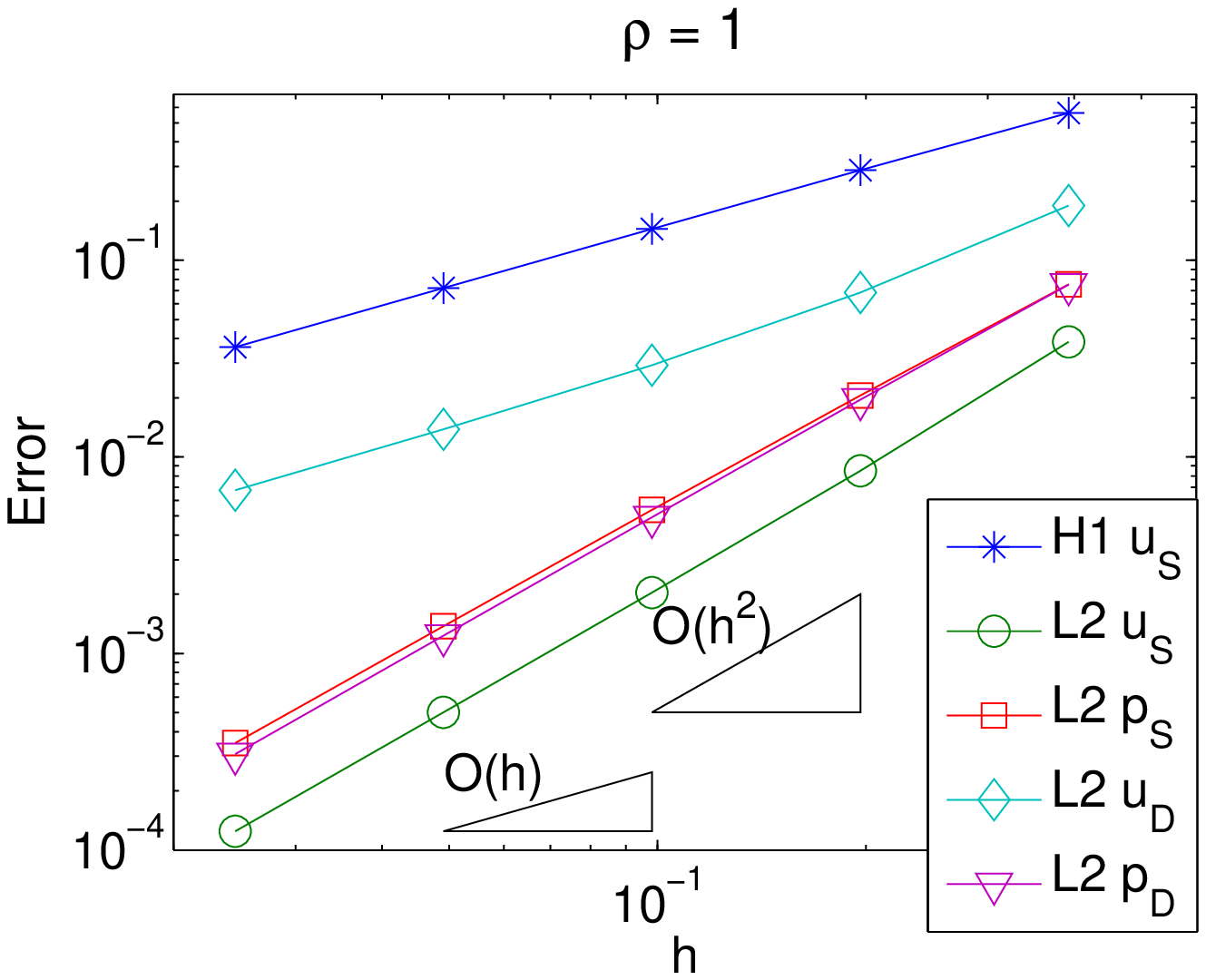}\\
\includegraphics[width=6cm]{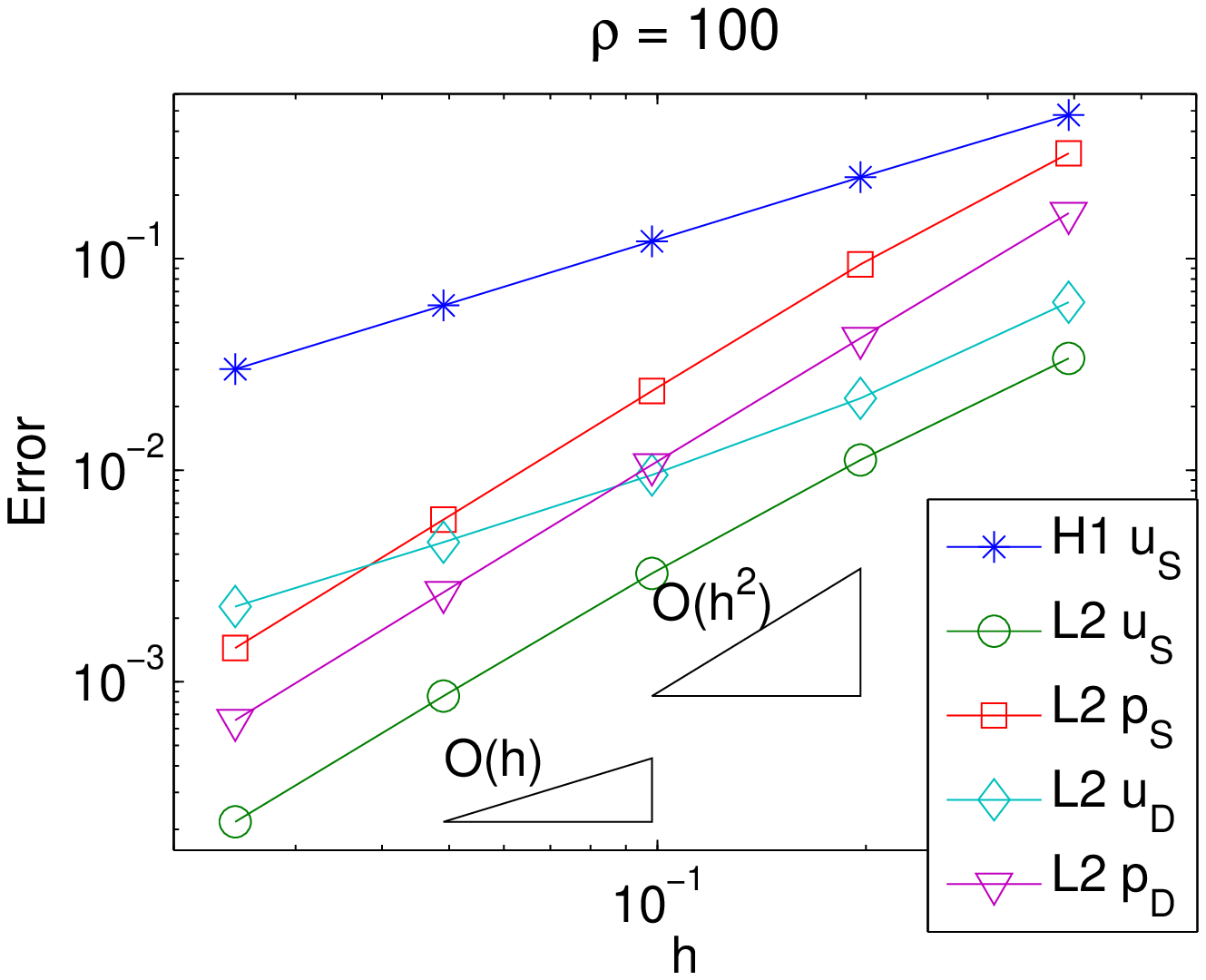}
\caption{Convergence rates, $\rho = 0.01$, $1$ and $100$.}
\label{fig:rho}
\end{center}
\end{figure}

\appendix
\section{Proof of Lemma \ref{lem:functional2}} \label{sec:appendix}

 By using the triangle
inequality, Equation (\ref{eq:boundary2interior}), the inverse
inequality, the property of $Q_b$ and $Q_0$, for all $\vv\in \vV_h$
we have
$$
\begin{aligned}
\sum_{K\in\T_h^S} h_K^{-1} \|\vv_0-\vv_b\|_{\partial K}^2 &\lesssim
\sum_{K\in\T_h^S} h_K^{-1}
  (\|\vv_0-Q_b\vv_0\|_{\partial K}^2 + \|Q_b\vv_0-\vv_b\|_{\partial K}^2) \\
  & \lesssim \sum_{K\in\T_h^S} h_K^{-1} \|\vv_0-Q_0\vv_0\|_{\partial K}^2 + \|\vv\|_{\vV_h}^2 \\
  &\lesssim \sum_{K\in\T_h^S} h_K^{-2} \|\vv_0-Q_0\vv_0\|_K^2 + \|\vv\|_{\vV_h}^2 \\
  &\lesssim \sum_{K\in\T_h^S} \|\nabla \vv_0\|_K^2 +
  \|\vv\|_{\vV_h}^2.
\end{aligned}
$$
Now the difficulty is to bound $\sum_{K\in\T_h^S} \|\nabla
\vv_0\|_K^2$ by $\|\vv\|_{\vV_h}^2$. By Lemma A.2 in
\cite{wy-Stokes}, one has $\|\nabla \vv_0\|_K^2 \lesssim \|\nabla_w
\vv\|_K^2 + h_K^{-1} \|Q_b\vv_0 - \vv_b\|_{\partial K}^2$ and this
seems to be a possible solution. However, on second thought,
$\|\nabla_w \vv\|_{\Omega_S}$ is not necessarily bounded by
$\|\vv\|_{\vV_h}$, which indeed contains $\|D_w(\vv)\|_{\Omega_S}$.
Here one obviously needs a discrete Korn's inequality involving the
weak gradient.

It turns out to be easier to first apply a discrete Korn's
inequality to $\nabla \vv_0$ instead of trying to bound it using
$\nabla_w \vv$. By \cite{Brenner03}, when all $K\in \mathcal{T}_h^S$
are affine homeomorphic to a fixed finite set of reference polygons,
one has
\begin{equation} \label{eq:discreteKorn}
\begin{aligned}
\sum_{K\in\T_h^S} \|\nabla \vv_0\|_K^2 &\lesssim
  \sum_{K\in\T_h^S} \|D(\vv_0)\|_K^2
  + \sup_{\stackrel{\mathbf{m}\in RM,\,\|\mathbf{m}\|_{\Gamma_S} = 1}{\int_{\Gamma_S}\mathbf{m}\, ds =
  \mathbf{0}}} \left(\int_{\Gamma_S} \vv_0\cdot\mathbf{m}\, ds
  \right)^2 \\[2mm]
  &\qquad + \sum_{e\in\E_{0,h}^S} \|\pi_e [\vv_0]\|_e^2,
  \end{aligned}
\end{equation}
where $RM$ is the space of rigid body motions, $\pi_e$ is the $L^2$
projection onto $(P_1(e))^2$ and $[\cdot]$ denotes the jump on
edges. Next, we estimate the right-hand side of
(\ref{eq:discreteKorn}) one-by-one.

Similar to Lemma A.2 in \cite{wy-Stokes}, we have
\begin{lemma} \label{lem:vv0}
For $\vv\in \vV_h$ and any $K\in \T_h^S$, we have
$$
\|D(\vv_0)\|_K \lesssim \|D_w(\vv)\|_K + h_K^{-1/2} \|Q_b\vv_0 -
\vv_b\|_{\partial K}.
$$
\end{lemma}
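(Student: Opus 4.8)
The plan is to test the defining relation (\ref{eq:weakgrad}) of the weak gradient against the classical symmetric gradient $D(\vv_0)$ itself. The observation that makes this possible is that, by condition (\ref{eq:parameters}), we have $\alpha_S \le \beta+1$, so $\vv_0 \in [P_{\alpha_S}(K)]^2$ forces $D(\vv_0) \in [P_\beta(K)]^{2\times 2}$, rendering it an admissible test tensor in (\ref{eq:weakgrad}).

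First I would establish a local identity connecting $D_w(\vv)$ and $D(\vv_0)$. For any \emph{symmetric} $\tau \in [P_\beta(K)]^{2\times 2}$ the symmetry gives $(D_w(\vv),\tau)_K = (\nabla_w\vv,\tau)_K$; expanding by (\ref{eq:weakgrad}) and integrating the volume term by parts yields
$$
(D_w(\vv),\tau)_K = (D(\vv_0),\tau)_K - \langle \vv_0 - \vv_b,\, \tau\vn\rangle_{\partial K}.
$$
This is precisely the computation already carried out in the proof of Lemma \ref{lem:vV-h-norm-1}. Choosing $\tau = D(\vv_0)$, which is legitimate by the remark above, and rearranging gives
$$
\|D(\vv_0)\|_K^2 = (D_w(\vv),\, D(\vv_0))_K + \langle \vv_0 - \vv_b,\, D(\vv_0)\vn\rangle_{\partial K}.
$$

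The key step, and the only spot where the difference between $\alpha_S$ and $\beta$ must be treated with care, is converting the boundary term into one involving $Q_b\vv_0 - \vv_b$, since it is this latter quantity that is bounded by $\|\vv\|_{\vV_h}$. On each edge $e \subset \partial K$ the trace $D(\vv_0)\vn$ is a vector polynomial of degree at most $\beta$ and is therefore invariant under $Q_b$; combining the self-adjointness of $Q_b$ with the identity $Q_b\vv_b = \vv_b$ (valid because $\vv_b \in [P_\beta(e)]^2$ on the Stokes side) lets me replace $\vv_0 - \vv_b$ by $Q_b\vv_0 - \vv_b$ in the pairing without changing its value:
$$
\langle \vv_0 - \vv_b,\, D(\vv_0)\vn\rangle_{\partial K} = \langle Q_b\vv_0 - \vv_b,\, D(\vv_0)\vn\rangle_{\partial K}.
$$

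To finish I would apply the Cauchy--Schwarz inequality to both terms and then bound $\|D(\vv_0)\vn\|_{\partial K} \lesssim \|D(\vv_0)\|_{\partial K} \lesssim h_K^{-1/2}\|D(\vv_0)\|_K$ using the trace inequality (\ref{eq:boundary2interior}) together with the inverse inequality for the polynomial $D(\vv_0)$. This gives
$$
\|D(\vv_0)\|_K^2 \lesssim \Big(\|D_w(\vv)\|_K + h_K^{-1/2}\|Q_b\vv_0 - \vv_b\|_{\partial K}\Big)\|D(\vv_0)\|_K,
$$
and dividing by $\|D(\vv_0)\|_K$ yields the claim. I do not anticipate a genuine obstacle: the entire argument rests on the single structural fact that $D(\vv_0) \in [P_\beta(K)]^{2\times 2}$, which is exactly what $\alpha_S \le \beta+1$ guarantees. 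The only delicate point is the projection manipulation in the boundary term, where --- in contrast to the $\alpha_S = \beta$ case treated in Lemma \ref{lem:functional1} --- $Q_b$ is no longer the identity on $\vv_0$, so one genuinely needs the $Q_b$-invariance of the degree-$\beta$ trace $D(\vv_0)\vn$.
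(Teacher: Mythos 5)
Your proof is correct and follows essentially the same route as the paper: both reduce to the identity $\|D(\vv_0)\|_K^2 = (D_w(\vv), D(\vv_0))_K + \langle Q_b\vv_0-\vv_b,\, D(\vv_0)\vn\rangle_{\partial K}$ (the paper obtains it by integrating $(D(\vv_0),\nabla\vv_0)_K$ by parts and invoking (\ref{eq:weakgrad}) with $\tau=D(\vv_0)$, you obtain it from the symmetric-test-tensor identity in the proof of Lemma \ref{lem:vV-h-norm-1}, which is the same computation), and both finish with Cauchy--Schwarz, the trace inequality (\ref{eq:boundary2interior}), and the inverse inequality. The two points you flag as delicate --- admissibility of $\tau=D(\vv_0)$ via $\alpha_S\le\beta+1$ and the $Q_b$-invariance of the boundary pairing --- are exactly the facts the paper's proof uses implicitly.
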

\begin{proof} Note that
$$
\begin{aligned}
(D(\vv_0),\, D(\vv_0))_K &= (D(\vv_0),\, \nabla \vv_0)_K \\
&= -(\vv_0,\, \nabla\cdot D(\vv_0))_K +
<\vv_0,\,D(\vv_0)\cdot\vn>_{\partial K} \\
&= (\nabla_w \vv,\,D(\vv_0))_K + <\vv_0 - \vv_b,\,D(\vv_0)\cdot\vn>_{\partial K} \\
&= (D_w(\vv),\,D(\vv_0))_K + <Q_b \vv_0 -
\vv_b,\,D(\vv_0)\cdot\vn>_{\partial K}.
\end{aligned}
$$
The lemma then follows from the Schwarz inequality, Inequality
(\ref{eq:boundary2interior}) and the inverse inequality.
\end{proof}

The estimate of the second and the third term in the right-hand side
of (\ref{eq:discreteKorn}) requires $\beta\ge 1$. Indeed, when
$\beta\ge 1$, since $RM\subset (P_\beta)^2$ and $\vv_b$ vanishes on
$\Gamma_S$, we have
\begin{equation} \label{eq:Korn1}
\begin{aligned}
\sup_{\stackrel{\mathbf{m}\in RM,\,\|\mathbf{m}\|_{\Gamma_S} =
1}{\int_{\Gamma_S}\mathbf{m}\, ds =
  \mathbf{0}}} \left(\int_{\Gamma_S} \vv_0\cdot\mathbf{m}\, ds \right)^2
&= \sup_{\stackrel{\mathbf{m}\in RM,\,\|\mathbf{m}\|_{\Gamma_S} =
1}{\int_{\Gamma_S}\mathbf{m}\, ds =
  \mathbf{0}}} \left(\int_{\Gamma_S} (Q_b\vv_0-\vv_b)\cdot\mathbf{m}\, ds \right)^2
  \\
  &\lesssim \|Q_b\vv_0-\vv_b\|_{\Gamma_{S}}^2,
\end{aligned}
\end{equation}
and since $\pi_e[\vv_0]\in (P_1(e))^2 \subset (P_{\beta}(e))^2$, we
have
\begin{equation} \label{eq:Korn2}
\begin{aligned}
\sum_{e\in\E_{0,h}^S} \|\pi_e [\vv_0]\|_e^2 &\lesssim
\sum_{e\in\E_{0,h}^S} \|Q_b [\vv_0]\|_e^2 \\
&\lesssim \sum_{K\in\T_h^S} \|Q_b \vv_0 - \vv_b\|_{\partial K}^2.
\end{aligned}
\end{equation}

Combining the above analysis, by using inequalities
(\ref{eq:discreteKorn}), (\ref{eq:Korn1}), (\ref{eq:Korn2}), Lemma
\ref{lem:vv0} and the fact that $O(1)\le h_K^{-1/2}$, this completes
the proof of Lemma \ref{lem:functional2}.

\bigskip
\textbf{ACKNOWLEDGMENTS:} Chen is supported by the Key Project
National Science Foundation of China(91130004) and Natural Science
Foundation of China (11171077, 11331004).  Wang thanks the Key
Laboratory of Mathematics for Nonlinear Sciences (EZH1411108/001) of
Fudan University, and the Ministry of Education of China \& State
Administration of Foreign Experts Affairs of China under the 111
project grant (B08018), for the support during her visit.


\end{document}